\documentclass{siamltex}
\usepackage{amsmath,amsfonts,amssymb,mathtools}
\usepackage{url}
\usepackage{latexsym}
\usepackage{graphicx,overpic}
\usepackage{subfigure,caption}
\usepackage{pgfplots}
\pgfplotsset{compat=newest}
\usepackage{color}
\usepackage{booktabs}
\usepackage{caption}
\usepackage{subcaption}
\usepackage{lscape}
\usepackage{array}
\newcolumntype{C}[1]{>{\centering\let\newline\\\arraybackslash\hspace{0pt}}m{#1}}
\usepackage{tikz}

\newcommand{\R}{\mathbb R}

\newcommand{\bL}{\mathbf L}

\newcommand{\bbH}{\Bbb{H}}
\newcommand{\bbY}{\Bbb{Y}}
\newcommand{\bbC}{\Bbb{C}}
\newcommand{\bbS}{\Bbb{S}}

\newcommand{\F}{\mathcal F}

\newcommand{\cP}{\mathcal P}

\newcommand{\rmax}{r_N^{\rm max}}

\newtheorem{remark}{Remark}[section]

\numberwithin{equation}{section}
\begin{document}

\title{On a maximum principle in discretization with spherical harmonics}
\author{
Arnold Reusken\thanks{Institut f\"ur Geometrie und Praktische  Mathematik, RWTH-Aachen
University, D-52056 Aachen, Germany (reusken@igpm.rwth-aachen.de).}
}
\maketitle
\begin{abstract}
Spherical harmonics have
been studied extensively and are used in many fields of application. We consider the probably earliest application of spherical harmonics, namely  for a representation of the solution of the Laplace equation on the $d$-dimensional unit sphere where the boundary data are represented in a spherical harmonics basis. We consider general $d \geq 2$. For the solution $u$ of the Laplace equation the fundamental maximum principle holds. If one approximates the boundary data in the finite dimensional space spanned by all spherical harmonics of degree at most $N$, the resulting discrete solution, denoted by $u_N$, does in general not satisfy the maximum principle. In this paper we derive sharp estimates for $\rmax$ such that the maximum principle holds for $u_N(x)$ for all $x \in \R^d$ with $\|x\| \leq \rmax$. We show that for $d=3$ one has $\rmax  \gtrsim 1-  3 N^{-1} \ln\big( 2^\frac13 N \big) $. Similar estimates are derived for $d \neq 3$. We also study the sharpness of these estimates. 
\end{abstract}

\begin{keywords}
Laplace equation, spherical harmonics approximation, discrete maximum principle.
\end{keywords}

\section{Introduction}
Starting with the works of Laplace and Legendre about 250 years ago, spherical harmonics have
been studied extensively and are used in many fields of application. Spherical harmonics have become a standard simulation  tool in very different branches of 
the natural sciences and engineering, including geoscience, computer science, astrophysics, acoustics, quantum chemistry, electromagnetics and medical imaging 
\cite{ferrari2022spherical,vsprlak2016spherical,liu2025efficient,garboczi20173d,capilla2021analysis,wei2018generation,ledesma2020spherical,carlson2018utilizing,toth2011obtaining,cobos2023acoustic,bigi2023fast,tornese2024maximizing,velimsky2025electromagnetic,ha2022spharm}.
Furthermore, the concepts introduced by Laplace and Legendre have been extended in several directions and mathematical analyses have been developed; for an overview of the mathematical theory of spherical harmonics we refer  to \cite{DaiXu,Atkinson}.\\
In this paper we consider the probably earliest application of spherical harmonics, namely in the groundbreaking work of Laplace in which he derived  the Newton potential and showed that this potential is harmonic (outside sources). He used spherical harmonics to study  properties of harmonics on a spherical domain. Reformulated in more modern mathematical language, he studied (among many other things) the Laplace equation
\begin{equation} \label{eqLaplaceA}
   \Delta u= 0\quad   \text{in}~B^3, \quad u=f  \quad \text{on}~S^2,
\end{equation}
where $B^3$ is the (open) unit ball in $\R^3$ and $S^2=\partial B^3$ its boundary. As we know from the work of Laplace and Legendre, the spherical harmonics are a convenient tool for representing the harmonic function $u$ in terms of the boundary data $f$. We introduce some notation to explain this in more detail. Let $(Y_{n,j})_{1 \leq j\leq 2n+1}$ be the (normalized) spherical harmonics of degree $n$ (precise definitions are given in Section~\ref{secBasics}). Any $f \in L^2(S^2)$ can be represented in the basis of spherical harmonics, i.e., $f= \sum_{n=0}^\infty \sum_{j=1}^{2n+1} \alpha_{n,j}Y_{n,j}$ and the harmonic $u$ that solves \eqref{eqLaplaceA} is given by $u(r\xi)=\sum_{n=0}^\infty \sum_{j=1}^{2n+1}r^n \alpha_{n,j}Y_{n,j}(\xi)$, $\xi \in S^2$, $ r \in [0,1)$. In a numerical method based on this representation, which essentially is a spectral method, one replaces the series by a finite sum: $u_N(r\xi):=\sum_{n=0}^N \sum_{j=1}^{2n+1} r^n\alpha_{n,j}Y_{n,j}(\xi)$. For applications with (close to) spherical domains this basic idea can result in a feasible numerical method, cf. the references above. In a recent series of papers this technique is also applied to a Laplace equation on a domain consisting of a union of balls. Such a domain occurs in the COnductor-like Screening MOdel (COSMO)~\cite{Klamt}, which is a particular type of  continuum solvation model that plays a fundamental role  in computational chemistry.  We refer to the review articles~\cite{lipparini2016,tomasi2005quantum} for a thorough introduction to continuum solvation models. 
The COSMO approach uses the so-called van der Waals cavity, which models the solute's cavity as a union of balls, each of them centered around an atom. The solution of  a Laplace equation on such a cavity can be approximated using a Schwarz domain decomposition method. This  so-called ddCOSMO solver has been introduced in ~\cite{CaMaSt13} and is further extended in several directions, cf. \cite{Stammetal,Quan2018,Stamm2023} for an overview.  This approach, in which a spectral approximation with spherical harmonics on each spherical subdomain of the cavity plays a crucial role, leads to a very efficient numerical method for such a continuum solvation model.
 \\
A fundamental property of the Laplace equation is the maximum principle. Assume for simplicity that $f$ is continuous. Then for the solution $u$ of \eqref{eqLaplaceA} we have 
\begin{equation} \label{Max1}
   \min_{\xi \in S^2} f(\xi) \leq u(x) \leq \max_{\xi \in S^2} f(\xi)\quad \text{for all}~x \in B^3.
\end{equation}
It is clear that in general this property does not hold if we replace $u$ by $u_N$. \emph{The main topic of this paper is the analysis of a discrete maximum principle for $u_N$}. First note that due to $u_N(0)=u(0)$ the maximum principle trivially holds for any $N$ with $B^3$ in \eqref{Max1} replaced by $0$. In the approximation of $f$ by spherical harmonics the relatively large errors in the maximumnorm are dominated by high frequencies (cf. the Gibbs phenomenom for a discontinuous $f$). These high frequencies are strongly damped by the inverse Laplacian, where the size of this damping is a monotonically increasing function of $1-r$. Hence, it is plausible that for $u_N$ a maximum principle as in \eqref{Max1} may hold with $B^3$ replaced by a smaller ball $B^3(r_N):=\{\, x\in B^3\,|\,\|x\| \leq r_N\,\}$ and $0<r_N <1$. Indeed, in an analysis of a Schwarz domain decomposition method for solving a Laplace equation on a domain formed by the union of two overlapping discs, such a discrete maximum principle is derived for the two-dimensional case \cite{ReuskenDD}. One then uses the Fourier modes as spherical harmonics (cf. Section~\ref{secBasics}). In that paper it is shown that a discrete maximum principle holds for the discrete spectral approximation $u_N$ on a disc with radius $r_N = 1- \frac{\ln (2 N)}{N}$ (cf. Remark~\ref{rem2dspecial}). \\
In this paper we derive a similar discrete maximum principle result not only for the three-dimensional case \eqref{eqLaplaceA} but for arbitrary dimension $d \geq 2$. We outline the main result for the case $d=3$. Let $\rmax$ be the maximum radius such that for $u_N$ the discrete maximum principle holds on $B^3(\rmax)$. We will show that $\rmax \geq 1- 3 \, \frac{\ln (2^{\frac13} N)}{N}$ for $N \geq 8$ holds (Theorem~\ref{mainThm1} and Remark~\ref{remspecialcase}) and that this bound is asymptotically sharp (for $N \to \infty$) apart from a factor $\tfrac32$ (Corollary~\ref{corol1}). Note that the estmate is not an asymptotic one, but holds also for small $N$ values. 
For general dimension $d \geq 2$ we derive an estimate that is essentially of the form $\rmax \geq 1- d\, \frac{\ln( \tfrac{2}{e} d N)}{N}$ for not too small $N$ values (Remark~\ref{remspecialcase}). 
As far as we know such a discrete maximum principle, that we consider to be a fundamental property of  this spectral discretization method, has not been treated in the literature. The analysis is based on a representation of the spectral approximation $u_N$ using a projected version $G_N=G_N(r,t)$ of the continuous Poisson kernel $G(r,t)=\frac{1-r^2}{(1+r^-2rt)^{d/2}}$, $r \in [0,1)$, $t \in [-1,1]$. Positivity of this discrete Poisson kernel $G_N(r,t)$ for $r \in [0,r_N]$, $t \in [-1,1]$, can be directly related to the maximum principle on a ball in $\R^d$ with radius $r_N$. The main results outlined above will follow from an analysis of positivity of $G_N$.\\[1ex]
The remainder of the paper is organized as follows. In Section~\ref{secgeneral} we generalize the problem \eqref{Max1} to $d \geq 2$ dimensions and represent its solution using the Poisson kernel. In Section~\ref{secBasics} we collect results on spherical harmonics, known from the literature, that we need in our analysis. In Section~\ref{SecDiscrete} we derive the discrete Poisson kernel $G_N$ that describes the relation between the data $f$ and the discrete solution $u_N$. In Section~\ref{SecKernel} we derive some properties of $G_N$. In particular we show that there is a unique largest radius $r_N=:\rmax <1$ such that the discrete maximum principle holds on the ball with this radius. In our analysis we use the Lambert W function. Definition and properties of this function, known from the literature, are collected in Section~\ref{SecLambert}. The main results of the paper, namely lower and upper bounds for $\rmax$ are presented in Section~\ref{secbounds}. Finally, in Section~\ref{SecExp} we present results of a few numerical experiments that yield accurate estimates for $\rmax$ for the cases $d=2$ and $d=3$.

\section{Laplace equation and Poisson kernel in $\R^d$} \label{secgeneral}
Let $B^d \subset \R^d$, $d \geq 2$, be the open unit ball in $\R^d$ with boundary $S^{d-1}$. The analysis that we present holds for all $d \geq 2$, but in examples we  will be particularly interested in $d=2$ and $d=3$, which are the cases that are most relevant in applications. We consider the Laplace equation in $ B^d$ with data $f$ on $S^{d-1}$. We restrict to functions $f$ that are essentially bounded on $S^{d-1}$ and continuous almost everywhere on $ S^{d-1}$:
\begin{equation} \label{assdata}
  f \in \F:= L^\infty(S^{d-1}) \cap C(U), \quad \text{with}~{\rm meas}_{d-1}(S^{d-1}\setminus U)=0.
\end{equation}
Then the function
\begin{equation} \label{solution}
   u(x)=\frac{1}{\omega_{d-1}} \int_{S^{d-1}} k(x,\eta) f(\eta) \, d\eta,\quad k(x,\eta):= \frac{1- \|x\|^2}{\|x- \eta\|^d},
\end{equation}
with $\|\cdot\|$ the Euclidean norm in $\R^d$ and $\omega_{d-1}:=|S^{d-1}|=\frac{2 \pi^{\frac{d}{2}}}{\Gamma(\frac{d}{2})}$,
is well-defined and solves the the Laplace problem 
\begin{equation} \label{eqLaplace}
  \Delta u = 0 \quad \text{in}~~B^d,\quad u=f \quad \text{a.e. on}~~S^{d-1},
\end{equation}
cf. \cite{HackbuschElliptic}. 
Note that $\omega_1=2 \pi$, $\omega_2=4 \pi$. Using $x=r \xi$, $r\in [0,1)$, $\|\xi\|=1$, and 
\[ 
  \|x- \eta\|^2= \|r \xi - \eta\|^2= r^2 +1- 2 r \eta \cdot \xi, \quad \eta \in S^{d-1}, 
\]
results in the representation
\begin{equation} \label{solution2} \begin{split}
  u(r \xi) & = \frac{1}{\omega_{d-1}} \int_{S^{d-1}} G(r, \eta \cdot \xi) f(\eta) \, d\eta, \quad r \in [0,1), ~\xi \in S^{d-1}, \\
  G(r,t) & :=  \frac{1-r^2}{(1+r^2 -2 r t)^\frac{d}{2}}  \quad  r \in [0,1), ~t \in [-1,1].
\end{split} \end{equation}
The maximum principle for the Laplace problem \eqref{eqLaplace} is equivalent to the following two properties of the Poisson kernel $G$:
\begin{align} 
  G(r,t) & \geq 0 \quad \text{for all}~(r,t) \in [0,1) \times [-1,1], \label{max1} \\
  \frac{1}{\omega_{d-1}} \int_{S^{d-1}} G(r, \eta \cdot \xi) \, d\eta &=1 \quad \text{for all}~ r \in [0,1),~\xi \in S^{d-1}. \label{max2} 
\end{align}
The property \eqref{max2} follows from the following argument. If in \eqref{eqLaplace} we take for $f$ the constant function $1$, the solution $u$ is the constant function $1$. Now use the representation formula \eqref{solution2}.
Below we consider an approximation of the boundary data by spherical harmonics, $f_N\approx f$, and study a maximum principle for the solution $u_N$ of the Laplace problem \eqref{eqLaplace} with $f$ replaced by $f_N$. 

\section{Basics on spherical harmonics} \label{secBasics}
We introduce notation and collect basic definitions and results known from the literature, e.g.~\cite{Atkinson}.
The spherical harmonics are defined as the (suitably scaled) eigenfunctions of the Laplace-Beltrami operator on $S^{d-1}$. Another possibility to introduce these functions is via homogeneous polynomials in $\R^d$. The latter approach is more constructive and allows the (easier) derivation of several important properties of spherical harmonics. Below we summarize definitions and known results, based on the exposition in \cite{Atkinson}, that we need in the analysis of the discrete Poisson kernel. 

The space of homogeneous polynomials in $\R^d$ of degree $n$ is given by 
\[
  \bbH_n^d=\{\, p(x)=\sum_{|\alpha|=n} a_\alpha x^\alpha\,|\, a_\alpha \in \bbC,~x \in \R^d\,\},
\]
with $\alpha=(\alpha_1,\ldots ,\alpha_d)$ the usual multi-index notation. For example, $\bbH_3^2=\{\, a_1x_1^3+a_2x_1^2x_2+a_3x_1 x_2^2 +a_4x_2^3\,|\, a_j \in \bbC\,\}$. Note that for $p \in \bbH_n^d$ there holds $p(\beta x)=\beta^n p(x)$ for all $\beta \in \bbC,~x \in \R^d$.   The \emph{homogeneous harmonics of degree $n$} are given by
\[
  \bbY_n(\R^d)=\{\, p \in \bbH_n^d\,|\, \Delta p=0\,\},
\]
and based on this we define the space of \emph{spherical harmonics of degree $n$}:
\[
  \bbY_n^d:=\bbY_n(\R^d)_{|S^{d-1}}.
\]
The  dimension of this space is denoted by $\nu_{n,d}$ and we have $\nu_{0,d}={\rm dim}(\bbY_0^d)=1$, 
\begin{equation} \label{defnu} \nu_{n,d}={\rm dim}(\bbY_n^d)=\frac{(2n+d-2)(n+d-3)!}{n! (d-2)!}, \quad n \geq 1.
\end{equation}
One may check $\nu_{n,2}= 2$ for $n \geq 1$, $\nu_{n,3}=2n+1$ for all $n \geq 0$. For $d \geq 4$ and $n \geq 1$ we note
\[ \begin{split}
\nu_{n,d} & = \frac{1}{(d-2)!} (2n+d -2)\prod_{k=1}^{d-3}(n+k)=\frac{1}{(d-2)!} n^{d-2} \big(2+\frac{d -2}{n}\big) \prod_{k=1}^{d-3}(1+\frac{k}{n})  \\ & \leq 
\frac{1}{(d-2)!} n^{d-2} d \prod_{k=1}^{d-3} (1+k) = d n^{d-2}.
\end{split} \]
Hence,
\begin{equation} \label{boundnu} \begin{split}
  \nu_{n,d} & \leq C_d (n+1)^{d-2} , \quad \text{for all}~ n \geq 1,~d \geq 2, \\ & \text{with}~ C_2=C_3=2,~C_d=d\quad \text{for}~d \geq 4. 
\end{split} \end{equation}
The generating function corresponding to $(\nu_{n,d})_{n \geq 0}$ is given by (\cite[(2.13)]{Atkinson})
\begin{equation} \label{genfunction} \sum_{n=0}^\infty \nu_{n,d} z^n = \frac{1+z}{(1-z)^{d-1}}\quad \text{for}~|z|<1. 
\end{equation}
Below, $\xi \in \R^d$ will always satisfy $\|\xi\|=1$, i.e., $\xi \in S^{d-1}$. Note that any spherical harmonic $Y_n \in \bbY_n^d$ is related to a unique homogeneous harmonic $H_n \in \bbH_n^d$ by $H_n(r \xi)=r^n Y_n(\xi)$.\\
 A relevant subset of $\bbY_n(\R^d)$ is the one of \emph {Legendre harmonics} $\bL_{n,d}: \R^d \to \R$,  consisting of \emph{real valued} $\bL_{n,d} \in  \bbY_n(\R^d)$ that satisfy $\bL_{n,d}(e_d)=1$ and $\bL_{n,d}(Q x)=\bL_{n,d}(x)$ for all  $x\in \R^d$ and all
orthogonal  matrices $Q \in \R^{d \times d}$ that satisfy $Qe_d=e_d$, with $e_d=(0,\ldots,0,1)^T \in \R^d$ (cf. \cite[Section 2.1.2]{Atkinson}).
Arbitrary $\xi \in S^{d-1}$ and $t \in [-1,1]$ are uniquely related by 
\[
   \xi = \sqrt{1-t^2} \begin{pmatrix} \hat \xi \\ 0 \end{pmatrix} + \begin{pmatrix} 0 \\ t\end{pmatrix}, \quad \hat \xi \in S^{d-2},\, 0 \in \R^{d-1},   
\]
and the \emph{Legendre polynomial of degree $n$} is defined by
\[
  L_{n,d}(t):= \bL_{n,d}(\xi), \quad t\in [-1,1].
\]
For $x =r \xi \in B^d$ we then have the relation
\[
  \bL_{n,d}(x)=\bL_{n,d}(r \xi)=r^n \bL_{n,d}(\xi)=r^n L_{n,d}(t).
\]
\begin{remark} \label{remex1} \rm
For the cases $d=2$, $d=3$, these Legendre polynomials are the Chebyshev and classical Legendre polynomials:
\begin{align} 
   L_{n,2}(t)& =\cos (n \arccos t)=:T_n(t).\label{Legendredistwo}\\
   L_{n,3}(t)& = \frac{1}{2^n n!} \left(\frac{d}{dt}\right)^n (t^2-1)^n =:L_n(t). \label{Legendredisthree}
\end{align}
\end{remark}

We recall a few fundamental results. Below orthogonality is with respect to the complex $L^2$ scalar product on $S^{d-1}$, i.e., $(f,g)_{S^{d-1}}=\int_{S^{d-1}} f(\xi)\overline{g(\xi)}\, d\xi$. By $L^2(S^{d-1})$ we denote the space of complex-valued  squared Lebesque
integrable functions on $S^{d-1}$.
\begin{lemma} \label{LemAddition} (Addition theorem \cite[Theorem 2.9]{Atkinson}) Let $(Y_{j})_{1 \leq j \leq \nu_{n,d}}$ be an orthonormal basis of $\bbY_n^d$. The following holds:
\begin{equation} \label{addition}
  \sum_{j=1}^{\nu_{n,d}} Y_{j}(\xi) \overline{Y_{j}(\eta)}=\frac{\nu_{n,d}}{\omega_{d-1}} L_{n,d}(\xi \cdot \eta), \quad \xi,\eta \in S^{d-1}.
\end{equation}
\end{lemma}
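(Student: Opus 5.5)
The plan is to introduce the reproducing kernel of $\bbY_n^d$,
\[
  K_n(\xi,\eta):=\sum_{j=1}^{\nu_{n,d}} Y_{j}(\xi)\overline{Y_{j}(\eta)},
\]
and show that it depends only on $\xi\cdot\eta$ and is a fixed multiple of $L_{n,d}(\xi\cdot\eta)$. Invariance comes first. $K_n$ does not depend on which orthonormal basis is chosen, since a change of basis is a unitary matrix $U$ and $\sum_j (UY)_j(\xi)\overline{(UY)_j(\eta)}=\sum_j Y_j(\xi)\overline{Y_j(\eta)}$. For an orthogonal $Q\in\R^{d\times d}$ the map $p\mapsto p\circ Q$ sends $\bbH_n^d$ to itself and commutes with $\Delta$, so it preserves $\bbY_n(\R^d)$. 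The surface measure on $S^{d-1}$ is rotation invariant, so $(Y_j\circ Q)_j$ is again an orthonormal basis of $\bbY_n^d$. It follows that $K_n(Q\xi,Q\eta)=K_n(\xi,\eta)$ for every orthogonal $Q$.

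Next, identify $K_n$ with the Legendre harmonic. Fix $\eta=e_d$ and set $H(x):=\sum_j H_j(x)\overline{Y_j(e_d)}$, where $H_j$ is the homogeneous harmonic extension of $Y_j$; then $H\in\bbY_n(\R^d)$. If $Qe_d=e_d$, invariance gives $H(Qx)=H(x)$. $H$ is real valued because $K_n(\xi,e_d)=\overline{K_n(e_d,\xi)}$, and by invariance $K_n(e_d,\xi)=K_n(\xi,e_d)$ for a $Q$ swapping $\xi$ and $e_d$ (a reflection). Also $H(e_d)=\sum_j |Y_j(e_d)|^2\ge 0$. The next step is a uniqueness statement, which I take from the characterization of Legendre harmonics in \cite{Atkinson}: a homogeneous harmonic of degree $n$ that is invariant under the stabilizer of $e_d$ is a scalar multiple of $\bL_{n,d}$. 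Hence $H=c_n\bL_{n,d}$ with $c_n=H(e_d)$. For a general $\eta$, choose $Q$ with $Q\eta=e_d$; then $K_n(\xi,\eta)=c_n L_{n,d}(Q\xi\cdot e_d)=c_nL_{n,d}(\xi\cdot\eta)$.

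Finally, compute the constant by setting $\xi=\eta$ and integrating over $S^{d-1}$. The left side gives $\sum_j\|Y_j\|^2=\nu_{n,d}$, and the right side gives $c_nL_{n,d}(1)\,\omega_{d-1}=c_n\omega_{d-1}$, because $L_{n,d}(1)=\bL_{n,d}(e_d)=1$. Therefore $c_n=\nu_{n,d}/\omega_{d-1}$, which is \eqref{addition}.

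The main obstacle is the uniqueness step: showing that the space of $Q$-invariant homogeneous harmonics of degree $n$ with $Qe_d=e_d$ is one dimensional. To prove it directly, I would write such a $p$ as $p(x)=\sum_k a_k\,x_d^{\,n-2k}\,\|\hat x\|^{2k}$, where $\hat x=(x_1,\ldots,x_{d-1})$. This form holds because invariance under the orthogonal group of $\R^{d-1}$ forces dependence on $\hat x$ only through $\|\hat x\|^2$. The condition $\Delta p=0$ then gives a two-term recursion determining every $a_k$ from $a_0$, so the space has dimension at most one.
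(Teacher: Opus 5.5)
Your proof is correct and follows essentially the standard argument for the cited result; the paper gives no proof of its own, only the reference to Atkinson--Han, Theorem~2.9. That argument consists of orthogonal invariance of the kernel $\sum_j Y_j(\xi)\overline{Y_j(\eta)}$, reduction to the one-dimensionality of the space of homogeneous harmonics invariant under the stabilizer of $e_d$ (your recursion $a_{k+1}=-\frac{(n-2k)(n-2k-1)}{2(k+1)(2k+d-1)}\,a_k$, together with existence of $\bL_{n,d}$, settles this), and fixing the constant by integrating $\sum_j|Y_j(\xi)|^2$ over $S^{d-1}$; your real-valuedness step for $H$ is harmless but not needed, since the uniqueness argument works equally well over $\bbC$.
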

A direct consequence of this lemma is the following. Define, for given $\xi \in S^{d-1}$ and $n,d\in \Bbb{N}$, $\ell_{n,d,\xi}(\eta):=L_{n,d}(\xi \cdot \eta)$, $\eta \in S^{d-1}$. The result \eqref{addition} implies that $\ell_{n,d,\xi} \in \bbY_n^d$ holds.
Another result, often called the Poisson identity, establishes a relation between  the Poisson kernel  $G$ in \eqref{solution2} and the Legendre polynomials. 
\begin{lemma} \label{LemPoisson} (Poisson identity \cite[Proposition 2.28]{Atkinson})
 The following holds:
 \begin{equation} \label{Poissonid}
   G(r,t)= \sum_{n=0}^\infty \nu_{n,d} r^n L_{n,d}(t), \quad (r,t) \in [0,1) \times [-1,1]. 
 \end{equation}
\end{lemma}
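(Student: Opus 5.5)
The Poisson identity \eqref{Poissonid} is proved by computing the solution $u$ of \eqref{eqLaplace} in two ways and then comparing kernels.

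\emph{Step 1: series representation of $u$.} Take $f\in L^2(S^{d-1})$ and expand it in an orthonormal basis $(Y_{n,j})$ of spherical harmonics: $f=\sum_n\sum_j \alpha_{n,j}Y_{n,j}$, with $\alpha_{n,j}=(f,Y_{n,j})_{S^{d-1}}$. Each $r^nY_{n,j}(\xi)=H_{n,j}(r\xi)$ is a homogeneous harmonic polynomial. Since $|\alpha_{n,j}|$ is bounded and $\|Y_{n,j}\|_\infty$ grows at most polynomially in $n$, the series $\sum_n\sum_j r^n\alpha_{n,j}Y_{n,j}(\xi)$ converges locally uniformly in $B^d$. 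It therefore defines a harmonic function whose boundary values are $f$ in the $L^2$ sense. For the $\|Y_{n,j}\|_\infty$ bound, apply the addition theorem \eqref{addition} with $\xi=\eta$, which gives $\sum_j|Y_{n,j}(\xi)|^2=\nu_{n,d}/\omega_{d-1}$ because $L_{n,d}(1)=1$. Combined with \eqref{boundnu}, this yields $\|Y_{n,j}\|_\infty\le \sqrt{C_d(n+1)^{d-2}/\omega_{d-1}}$.

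\emph{Step 2: kernel form of the series.} Insert $\alpha_{n,j}=\int f(\eta)\overline{Y_{n,j}(\eta)}\,d\eta$ and apply \eqref{addition} once more:
\[
\sum_n\sum_j r^n\alpha_{n,j}Y_{n,j}(\xi)=\frac{1}{\omega_{d-1}}\int_{S^{d-1}}\Big(\sum_{n=0}^\infty \nu_{n,d}r^nL_{n,d}(\xi\cdot\eta)\Big)f(\eta)\,d\eta .
\]
Interchanging sum and integral is legitimate for fixed $r<1$. Indeed, $|L_{n,d}|\le1$ (Cauchy--Schwarz in \eqref{addition}) and $\nu_{n,d}\le C_d(n+1)^{d-2}$ together give the summable majorant $C_d(n+1)^{d-2}r^n$. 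The same bound shows that the kernel series converges uniformly in $t\in[-1,1]$.

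\emph{Step 3: identification with $G$.} By uniqueness for the Dirichlet problem (which holds in the appropriate $L^2$/Poisson-integral sense), the function from Step 1 equals $u$ in \eqref{solution2}. Hence, for every $f$,
\[
\int_{S^{d-1}}\big(G(r,\xi\cdot\eta)-K(r,\xi\cdot\eta)\big)f(\eta)\,d\eta=0, \qquad K(r,t):=\sum_{n=0}^\infty \nu_{n,d}r^nL_{n,d}(t).
\]
Both kernels are continuous in $\eta$, so choosing $f=\overline{G-K}$ forces $G=K$ for all $t=\xi\cdot\eta\in[-1,1]$.

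\emph{Main obstacle.} The delicate point is Step 3, namely justifying that the spectral series and the Poisson integral represent the same harmonic function. This needs either a uniqueness theorem for $L^2$ boundary data, or the fact that the Poisson integral reproduces harmonic polynomials. The second route is simpler: it suffices to show that the Poisson integral of $f=Y_n$ equals $r^nY_n$, which holds because $r^nY_n(\xi)$ is harmonic and continuous up to the boundary. Then $\int G(r,\xi\cdot\eta)Y_n(\eta)\,d\eta=\omega_{d-1}r^nY_n(\xi)$, so $G(r,\cdot)$ has the $n$-th Funk--Hecke coefficient $\nu_{n,d}r^n$. Since $(Y_{n,j})$ is complete in $L^2(S^{d-1})$, the expansion of the zonal function $G(r,\xi\cdot\cdot)$ equals $K$.

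An alternative that avoids PDE uniqueness is the generating function $(1+r^2-2rt)^{-(d-2)/2}=\sum_n \binom{n+d-3}{n}r^nC_n(t)$-type Gegenbauer identities, applying $(1-r^2)$ and $r\partial_r$ manipulations. That route requires the normalization relation between $L_{n,d}$ and Gegenbauer polynomials, which is not stated in the paper, so I would take the route via the addition theorem.
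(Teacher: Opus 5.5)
The paper gives no proof of this lemma. It only cites \cite[Proposition 2.28]{Atkinson}, so there is no in-paper argument to match. Your argument is correct, provided you take the version in your \emph{Main obstacle} paragraph as the proof. With $g_\xi(\eta):=G(r,\xi\cdot\eta)$, two facts combine. First, the Poisson integral reproduces the harmonic polynomial $|x|^nY(x/|x|)$; this follows from the classical Dirichlet problem for continuous data plus the maximum principle. Second, $\bbY_n^d$ is closed under conjugation. Together they give $(g_\xi,Y_{n,j})_{S^{d-1}}=\omega_{d-1}r^n\overline{Y_{n,j}(\xi)}$, hence $P_ng_\xi=\nu_{n,d}r^n\ell_{n,d,\xi}$ by the addition theorem. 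Completeness plus the uniform majorant $C_d(n+1)^{d-2}r^n$ then upgrade the resulting $L^2$ identity to a pointwise identity of continuous functions. Step~1 and the vague appeal to uniqueness ``in the $L^2$ sense'' in Step~3 are then superfluous and should be dropped. The latter would in any case require the unproved fact that the paper's Poisson integral has $L^2$ boundary values $f$.

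Compare this with the standard textbook derivation you mention at the end. There one applies $\frac{1}{d-2}(2r\partial_r+d-2)$ to $\sum_{n}\binom{n+d-3}{n}r^nL_{n,d}(t)=(1+r^2-2rt)^{-(d-2)/2}$ and uses $\nu_{n,d}=\frac{2n+d-2}{d-2}\binom{n+d-3}{n}$. Your route needs no Gegenbauer normalization, covers $d=2$ without a separate computation, and uses only tools the paper already invokes (addition theorem, orthogonal decomposition, Poisson integral formula). It also exhibits \eqref{Poissonid} as the spherical-harmonic expansion of the zonal function $g_\xi$, which is exactly the viewpoint behind $G_N=\cP_NG$. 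Its price is the reliance on completeness and on potential theory. The generating-function argument is purely computational and independent of completeness, so it can itself be used to prove completeness via Abel--Poisson summability. In such a development your argument would be circular; within this paper, where the orthogonal decomposition is taken as known, it is not.
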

\begin{remark} \label{remex2} \rm 
For the cases $d=2$, $d=3$, we thus have the following representations for the Poisson kernel, cf. Remark~\ref{remex1}:
\begin{align}
 d=2:\quad G(r,t) &= \frac{1-r^2}{1+r^2-2 rt} = 1+2\sum_{n=1}^\infty r^n T_n(t), \label{form1}\\
 d=3:\quad G(r,t) &= \frac{1-r^2}{(1+r^2-2 rt)^{\frac32}} = \sum_{n=0}^\infty (2n+1) r^n L_n(t). \label{form2}
\end{align}
\end{remark}

\begin{lemma}(Orthogonal decomposition \cite[Theorem 2.38]{Atkinson}) We have the {\rm orthogonal decomposition}
 \begin{equation} \label{decomposition}
  L^2(S^{d-1})= \bigoplus_{n=0}^\infty \bbY_n^d.
 \end{equation}
\end{lemma}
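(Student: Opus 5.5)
The plan is to prove two things: the spaces $\bbY_n^d$ are mutually orthogonal in $L^2(S^{d-1})$, and their algebraic sum is dense in $L^2(S^{d-1})$. Together these give the orthogonal decomposition \eqref{decomposition}. Each $\bbY_n^d$ is finite dimensional (dimension $\nu_{n,d}$), hence closed, so the closure of the direct sum is exactly the Hilbert space direct sum.

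\emph{Orthogonality.} Take $Y_n\in\bbY_n^d$ and $Y_m\in\bbY_m^d$ with $n\neq m$, and let $H_n(r\xi)=r^nY_n(\xi)$ and $H_m(r\xi)=r^mY_m(\xi)$ be the corresponding homogeneous harmonics. I would apply Green's second identity on $B^d$ to $H_n$ and $\overline{H_m}$. Since both functions are harmonic, the volume term vanishes, which leaves
\[
 0=\int_{S^{d-1}}\big(H_n\,\partial_\nu \overline{H_m}-\overline{H_m}\,\partial_\nu H_n\big)\,d\xi .
\]
By Euler's identity for homogeneous functions, $\partial_\nu H_n=nH_n$ on $S^{d-1}$. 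The boundary integral therefore equals $(m-n)\,(Y_n,Y_m)_{S^{d-1}}$, and this forces $(Y_n,Y_m)_{S^{d-1}}=0$. An equivalent route is to note that $Y_n$ is an eigenfunction of the Laplace–Beltrami operator with eigenvalue $-n(n+d-2)$, and eigenvalues for distinct $n$ are distinct.

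\emph{Density.} This is the main step. First I would show that every polynomial $p$ restricted to $S^{d-1}$ lies in $\bigoplus_{n\le \deg p}\bbY_n^d$. It suffices to treat homogeneous $p\in\bbH_n^d$. For these I would use the Fischer-type decomposition
\[
 \bbH_n^d=\bbY_n(\R^d)\oplus \|x\|^2\,\bbH_{n-2}^d .
\]
To prove it, equip $\bbH_n^d$ with the inner product $\langle p,q\rangle = p(\partial)\overline{q}$. Multiplication by $\|x\|^2$ is the adjoint of $\Delta$ with respect to this inner product. Hence the orthogonal complement of $\|x\|^2\bbH_{n-2}^d$ consists exactly of those $p$ with $\Delta p=0$. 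Iterating the decomposition gives
\[
 p=\sum_{k\ge0}\|x\|^{2k}h_{n-2k},\qquad h_{n-2k}\in\bbY_{n-2k}(\R^d).
\]
On the sphere $\|x\|=1$, so $p|_{S^{d-1}}$ is a finite sum of spherical harmonics.

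Next, by the Stone–Weierstrass theorem, restrictions of polynomials are dense in $C(S^{d-1})$ in the maximum norm. Since $S^{d-1}$ has finite measure, sup-norm density implies $L^2$ density in $C(S^{d-1})$. Finally, $C(S^{d-1})$ is dense in $L^2(S^{d-1})$. Combining these, $\operatorname{span}\bigcup_n\bbY_n^d$ is dense in $L^2(S^{d-1})$, which together with orthogonality proves \eqref{decomposition}.

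I expect the main obstacle to be the Fischer decomposition, specifically the adjointness computation for the differential inner product. If that becomes cumbersome, I would instead prove surjectivity of $\Delta:\bbH_n^d\to\bbH_{n-2}^d$ by a dimension count. That route also reproduces the formula \eqref{defnu} for $\nu_{n,d}$.
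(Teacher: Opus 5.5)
Your argument is correct. It is the standard proof of \cite[Theorem 2.38]{Atkinson}, which the paper cites without proof: orthogonality of different degrees via Green's second identity and Euler's relation $\partial_\nu H_n=nH_n$, and density via $\bbH_n^d=\bbY_n(\R^d)\oplus\|x\|^2\bbH_{n-2}^d$ (from the inner product $p(\partial)\overline{q}$), Weierstrass approximation on $S^{d-1}$, and density of $C(S^{d-1})$ in $L^2(S^{d-1})$. The one loose point is your fallback: getting surjectivity of $\Delta:\bbH_n^d\to\bbH_{n-2}^d$ ``by a dimension count'' needs $\dim\bbY_n(\R^d)=\nu_{n,d}$ from \eqref{defnu} as input, so it cannot reproduce that formula, but your main route does not rely on it.
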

We introduce the orthogonal projection $P_n: L^2(S^{d-1}) \to \bbY_n^d$. If $(Y_{n,j})_{1 \leq j \leq \nu_{n,d}}$ is an orthonormal basis of $\bbY_n^d$ this projection has the representation
\begin{equation} \label{projection}
 (P_n f)(\xi)= \sum_{j=1}^{\nu_{n,d}} (f,Y_{n,j})_{S^{d-1}} Y_{n,j}(\xi), \quad \xi \in S^{d-1}. 
\end{equation}
Due to the orthogonality property we have $P_nP_m=\delta_{nm}$. 
Using the addition theorem (Lemma~\ref{LemAddition}) one obtains the following alternative  representation of this projection.
\begin{lemma} \label{simpleP}
 For the orthogonal projection $P_n$ we have the representation
 \begin{equation} \label{simplePform}
  (P_n f)(\xi)= \frac{\nu_{n,d}}{\omega_{d-1}} \int_{S^{d-1}} L_{n,d}(\xi \cdot \eta) f(\eta)\, d\eta, \quad \xi \in S^{d-1}.
 \end{equation}
\end{lemma}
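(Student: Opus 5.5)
Starting from the orthonormal-basis representation \eqref{projection}, I write out the coefficient $(f,Y_{n,j})_{S^{d-1}}=\int_{S^{d-1}} f(\eta)\overline{Y_{n,j}(\eta)}\,d\eta$. This gives
\[
(P_nf)(\xi)=\sum_{j=1}^{\nu_{n,d}} Y_{n,j}(\xi)\int_{S^{d-1}} f(\eta)\overline{Y_{n,j}(\eta)}\,d\eta .
\]
Since the sum is finite (of length $\nu_{n,d}$), I interchange summation and integration without further justification:
\[
(P_nf)(\xi)=\int_{S^{d-1}}\Big(\sum_{j=1}^{\nu_{n,d}} Y_{n,j}(\xi)\overline{Y_{n,j}(\eta)}\Big) f(\eta)\,d\eta .
\]
Each $Y_{n,j}$ is continuous on $S^{d-1}$, being the restriction of a polynomial. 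Hence the integrand is in $L^2(S^{d-1})$ whenever $f$ is, and the integral is well defined.

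Next I apply the addition theorem (Lemma~\ref{LemAddition}) to the bracketed kernel. It replaces the kernel by $\frac{\nu_{n,d}}{\omega_{d-1}}L_{n,d}(\xi\cdot\eta)$, which yields exactly \eqref{simplePform}. The right-hand side no longer involves the particular basis, so the representation holds independently of which orthonormal basis of $\bbY_n^d$ is used in \eqref{projection}.

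There is no real obstacle here. The only point needing care is the order of the complex conjugation: the addition theorem is stated with $Y_j(\xi)\overline{Y_j(\eta)}$, which matches the conjugation produced by the scalar product $(f,g)_{S^{d-1}}=\int f\overline g$. I will check that this matches as written, so that no conjugate of $L_{n,d}$ appears. Even if it did, $L_{n,d}$ is real-valued, so the conjugate would change nothing.
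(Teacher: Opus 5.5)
Your proposal is correct and follows the paper's own argument exactly: expand the coefficients $(f,Y_{n,j})_{S^{d-1}}$ in \eqref{projection}, interchange the finite sum with the integral, and apply the addition theorem \eqref{addition} to the kernel $\sum_j Y_{n,j}(\xi)\overline{Y_{n,j}(\eta)}$. Your conjugation check is also right: the order matches \eqref{addition} as written, so no conjugate of the real-valued $L_{n,d}$ appears.
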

\begin{proof}
 We use \eqref{addition} and obtain
 \begin{align*}
  (P_n f)(\xi)& =\sum_{j=1}^{\nu_{n,d}} (f,Y_{n,j})_{S^{d-1}} Y_{n,j}(\xi) \\
  & = \int_{S^{d-1}} \sum_{j=1}^{\nu_{n,d}} Y_{n,j}(\xi)\overline{Y_{n,j}(\eta)} \, f(\eta)\, d\eta \\
   & = \frac{\nu_{n,d}}{\omega_{d-1}}\int_{S^{d-1}} L_{n,d}(\xi \cdot \eta)\, f(\eta)\, d\eta.
 \end{align*}
\end{proof}
\ \\
The result \eqref{simplePform}  establishes a fundamental relation between Legendre polynomials and the space of spherical harmonics. Note that in the representation \eqref{simplePform} we do not not need a basis of $\bbY_n$ and that the Legendre polynomial $L_{n,d}$ depends on the \emph{scalar} argument $\xi \cdot \eta$. 

Finally we introduce the \emph{space of spherical harmonics of degree at most $N$}:
\[
  \bbS_N^d:= \bigoplus_{n=0}^N \bbY_n^d. 
\]
The orthogonal projection on this space is denoted by $\cP_N:\, L^2(S^{d-1}) \to \bbS_N^d$. For this projection we have $\cP_N = \sum_{n=0}^N P_n$. 
\section{Spherical harmonics discretization and the discrete Poisson kernel} \label{SecDiscrete}
For the discretization of the Laplace problem we note that for the data $f$ we have $f\in L^2(S^{d-1})$, cf. \eqref{assdata}. In the discretization we 
replace $f$ by $\cP_Nf$.  Hence, the discrete solution $u_N \in C^\infty(B^d)$ that approximates $u$ is given by
\begin{equation} \label{eqLaplaceDiscrete}
 \Delta u_N = 0 \quad \text{in}~~B^d,\quad u_N=\cP_Nf \quad \text{on}~~S^{d-1}.
\end{equation}
In an implementation of the discretization \eqref{eqLaplaceDiscrete} one usually determines (approximately) $P_Nf$ in an orthonormal spherical harmonics basis $(Y_{n,j})_{1 \leq j \leq \nu_{n,d}}$, $0 \leq n \leq N$. There is a general technique for constructing a suitable orthonormal basis in any dimension $d \geq 2$, cf. \cite[Section 2.11]{Atkinson}. We then have the representation:
\begin{equation} \label{algostep1}
 \cP_Nf = \sum_{n=0}^N \sum_{j=1}^{\nu_{n,d}} \alpha_{n,j} Y_{n,j}, \quad \alpha_{n,j} =(f, Y_{n,j})_{S^{d-1}}.
\end{equation}
The corresponding harmonic on $B^d$ is  given by
\begin{equation} \label{algostep2}
 u_N(r \xi) = \sum_{n=0}^N r^n \sum_{j=1}^{\nu_{n,d}} \alpha_{n,j} Y_{n,j}, \quad r \in [0,1),~\xi \in S^{d-1}.
\end{equation}
The computational work needed for a discretization based on \eqref{algostep1}-\eqref{algostep2} is dominated by  the quadrature for approximation of the  integrals $(f, Y_{n,j})_{S^{d-1}}$. For $d=2$ we write $\xi=(\cos \theta, \sin \theta)$ and an  orthogonal basis of $Y_n^2$, $n \geq 1$, is given by $Y_{n,1}(\xi)= \cos (n \theta)$, $Y_{n,2}(\xi)= \sin (n \theta)$. Using suitable quadrature points the integrals $(f, Y_{n,j})_{S^{1}}$ can be approximated very efficiently using the fast Fourier transform. For $d=3$ the classical spherical harmonics form an orthonormal system and the Lebedev quadrature, which is designed specifically for accurate integration of spherical harmonics, is often used.

Clearly,  for the discrete solution $u_N$ a maximum principle (for simplicity we assume $f \in C(S^{d-1})$) of the form  $\min_{\xi \in S^{d-1}} f(\xi) \leq u_N(x) \leq \max_{\xi \in S^{d-1}} f(\xi)$ for all $x \in B^d$ does in general not hold. Let $B^d(r)$ denote the ball with center 0 and radius $r$ in $\R^d$. In the remainder of the paper we show that such a maximum principle does hold if one replaces $x \in B^d$ by $x \in B^d(r_N^\ast)$,  and we derive lower and upper estimates that reveal  how the maximal radius $r_N^\ast$ depends on the spherical harmonics degree $N$. 

The analysis is based on a study of the \emph{discrete  Poisson kernel}, that we now introduce. Note that the discrete solution $u_N$ can be represented as 
\begin{equation} \label{LaplaceSol}
 u_N(x) = \frac{1}{\omega_{d-1}} \int_{S^{d-1}}G(r, \xi \cdot \eta) \cP_Nf(\eta)\, d\eta, \quad x=r \xi \in B^d,
\end{equation}
cf. \eqref{eqLaplaceDiscrete} and \eqref{solution2}. 
We can shift the orthogonal projection to the kernel function $G$. A formula for the  discrete Poisson kernel, denoted by $G_N$,  is derived in the following lemma.
\begin{lemma} 
Let $f \in \F$. The following identity holds for all $\xi \in S^{d-1}$, $r \in [0,1)$:
\begin{equation} \label{idGN} \begin{split}
 \int_{S^{d-1}}G(r, \xi \cdot \eta) \cP_N f(\eta)\, d\eta & = \int_{S^{d-1}}G_N(r, \xi \cdot \eta) f(\eta)\, d\eta, \\\ \text{with}~~ G_N(r,t)& := \sum_{n=0}^N \nu_{n,d} r^n L_{n,d}(t) \qquad \text{{\rm(}$\nu_{n,d}$ defined in\eqref{defnu}{\rm)}}.
\end{split} \end{equation}
\end{lemma}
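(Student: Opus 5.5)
We will prove the identity by expanding $G$ with the Poisson identity (Lemma~\ref{LemPoisson}), then using the integral form of the projections (Lemma~\ref{simpleP}) and their mutual orthogonality to move $\cP_N$ from $f$ onto the kernel.

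\textbf{Step 1: expand the kernel.} Fix $\xi\in S^{d-1}$ and $r\in[0,1)$. By Lemma~\ref{LemPoisson},
\[
G(r,\xi\cdot\eta)=\sum_{n=0}^\infty \nu_{n,d} r^n L_{n,d}(\xi\cdot\eta).
\]
We need this series to converge uniformly in $\eta\in S^{d-1}$, so that we may integrate term by term against $\cP_N f$. Since $\cP_N f$ is a finite sum of spherical harmonics, it is continuous, hence bounded, and term-by-term integration is then legitimate. The standard bound $|L_{n,d}(t)|\le 1$ on $[-1,1]$ is not among the results stated so far. It follows from the addition theorem (Lemma~\ref{LemAddition}) together with Cauchy--Schwarz, using $L_{n,d}(1)=1$. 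With this bound and \eqref{boundnu}, the series is dominated by $\sum_n C_d(n+1)^{d-2}r^n<\infty$, which gives uniform convergence. We consider this the main technical point; everything else is algebra.

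\textbf{Step 2: integrate term by term.} By Lemma~\ref{simpleP}, for each $n$ we have
\[
\frac{\nu_{n,d}}{\omega_{d-1}}\int_{S^{d-1}} L_{n,d}(\xi\cdot\eta)\,g(\eta)\,d\eta=(P_n g)(\xi)
\]
for every $g\in L^2(S^{d-1})$. Applying this with $g=\cP_N f$ gives
\[
\frac{1}{\omega_{d-1}}\int_{S^{d-1}} G(r,\xi\cdot\eta)\,\cP_Nf(\eta)\,d\eta=\sum_{n=0}^\infty r^n\,(P_n\cP_N f)(\xi).
\]
Since $P_nP_m=\delta_{nm}P_n$, we get $P_n\cP_N=P_n$ for $n\le N$ and $P_n\cP_N=0$ for $n>N$. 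The sum is therefore $\sum_{n=0}^N r^n (P_nf)(\xi)$. The pointwise meaning of $P_n g$ is the integral formula \eqref{simplePform}, and both sides are continuous, so there is no almost-everywhere ambiguity.

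\textbf{Step 3: fold back into the kernel.} Apply Lemma~\ref{simpleP} once more, now with $g=f$, which is allowed since $f\in\F\subset L^2(S^{d-1})$. This gives
\[
\sum_{n=0}^N r^n (P_nf)(\xi)=\frac1{\omega_{d-1}}\int_{S^{d-1}}\sum_{n=0}^N\nu_{n,d}r^nL_{n,d}(\xi\cdot\eta)f(\eta)\,d\eta=\frac1{\omega_{d-1}}\int_{S^{d-1}} G_N(r,\xi\cdot\eta)f(\eta)\,d\eta .
\]
The sum here is finite, so interchanging it with the integral needs no justification. Multiplying by $\omega_{d-1}$ yields \eqref{idGN}.
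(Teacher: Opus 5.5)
Your proof is correct and follows essentially the same route as the paper: expand $G$ with the Poisson identity, then use the orthogonality of the decomposition \eqref{decomposition} to kill all terms with $n>N$. The differences are minor. You read each term as $(P_n\cP_N f)(\xi)$ via Lemma~\ref{simpleP} and then fold back, whereas the paper moves $\cP_N$ onto $\ell_{n,d,\xi}\in\bbY_n^d$ by self-adjointness; you also justify the term-by-term integration (via $|L_{n,d}|\le 1$ from the addition theorem together with \eqref{boundnu}), which the paper leaves implicit.
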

\begin{proof}
 Let $f \in \F$, $\xi \in S^{d-1}$ and $r \in [0,1)$ be given. Recall $\cP_N=\sum_{m=0}^N P_m$ and $G(r, \xi \cdot \eta)=\sum_{n=0}^\infty \nu_{n,d} r^n L_{n,d}(\xi \cdot \eta)$. For $\ell_{n,d,\xi}(\eta):=L_{n,d}(\xi \cdot \eta)$ we have $\ell_{n,d,\xi} \in \bbY_n^d$. Due to the orthogonality property \eqref{decomposition} we have $P_n \ell_{n,d,\xi}=\ell_{n,d,\xi}$ and $P_m \ell_{n,d,\xi}=0$ for $n \neq m$. Using this we obtain
 \begin{align*}
  \int_{S^{d-1}}G(r, \xi \cdot \eta) \cP_Nf(\eta)\, d\eta &= \sum_{n=0}^\infty \nu_{n,d} r^n\int_{S^{d-1}} \ell_{n,d,\xi}(\eta)\cP_Nf(\eta)\, d\eta \\
  &=   \sum_{n=0}^\infty \nu_{n,d} r^n \sum_{m=0}^N \int_{S^{d-1}}P_m\ell_{n,d,\xi}(\eta)f(\eta)\, d\eta \\
  &= \sum_{n=0}^N \nu_{n,d} r^n \int_{S^{d-1}}\ell_{n,d,\xi}(\eta)f(\eta)\, d\eta\\
  &=\int_{S^{d-1}}  \sum_{n=0}^N \nu_{n,d} r^n  L_{n,d}(\xi \cdot \eta) f(\eta)\, d\eta ,
 \end{align*}
which yields the result \eqref{idGN}. 
\end{proof}

The proof above shows that $G_N$ is the \emph{projected kernel}, $G_N=\cP_NG$, in the following sense. For given $r \in [0,1)$, $\xi \in S^{d-1}$, define $G_{r,\xi}: \, \eta \to G(r,\xi \cdot \eta)$, $\eta \in S^{d-1
}$. Then $G_N(r,\xi\cdot \eta)=(\cP_N G_{r,\xi})(\eta)$ holds. 
The projected kernel function $G_N$ determines the dependence of $u_N$ on the \emph{original} problem data $f$:\begin{equation} \label{repres1discr}
  u_N(x) = \frac{1}{\omega_{d-1}} \int_{S^{d-1}}G_N(r, \xi \cdot \eta) f(\eta)\, d\eta, \quad x=r \xi \in B^d. 
\end{equation}.
\section{Properties of the discrete Poisson kernel $G_N$} \label{SecKernel}
In this section we study properties of $G_N$. 
First note that for the kernel $G$ we have the fundamental properties \eqref{max1}-\eqref{max2} that are equivalent to the maximum priniciple of the Laplace problem. 
 We now discuss these two properties for  the discrete Poisson kernel $G_N$. The scaling property \eqref{max2} also holds for $G_N$, since using $G_N=\cP_N G$ we obtain  for arbitrary $N \in \Bbb{N}$:
\begin{equation} \label{maxcond2GN} \begin{split}
 & \frac{1}{\omega_{d-1}}\int_{S^{d-1}} G_N(r,\xi\cdot\eta)\, d\eta  =\frac{1}{\omega_{d-1}}\int_{S^{d-1}} G(r,\xi\cdot\eta) \cP_N1 \, d\eta \\ & = \frac{1}{\omega_{d-1}}\int_{S^{d-1}} G(r,\xi\cdot\eta)d\eta=1 \quad\text{for $\xi \in S^{d-1}$}, ~0 \leq r <1. 
\end{split}
\end{equation}
 The positivity property $\min_{|t|\leq 1} G_N(r,t) \geq 0$ for all $r \in [0,1)$,  however, does not hold. To illustrate this we show the discrete kernel $G_N(r,t)$ for $N=7$ and $N=20$ in Figure~\ref{FigKN}. 
\begin{figure}[ht!]
	\hspace*{-0.3cm}\centering\includegraphics[width=0.53\textwidth]{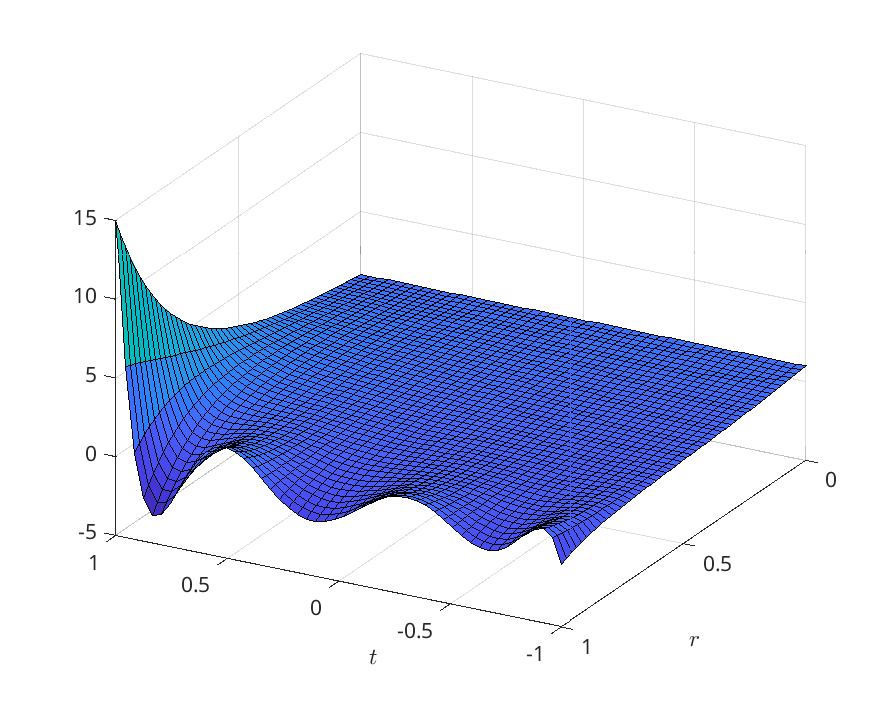} \hspace*{-0.8cm}
	\includegraphics[width=0.53\textwidth]{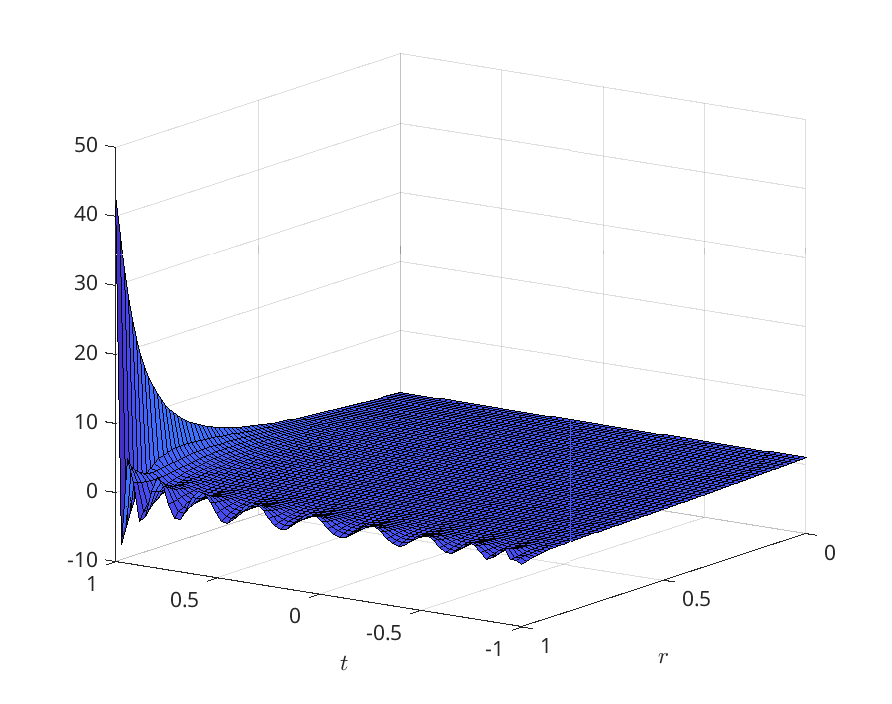}
	\caption{Discrete Poisson kernels $G_N(r,t)$ for $N=7$ (left) and $N=21$ (right). \label{FigKN}}
\end{figure}
\ \\
 The main topic of this paper is to answer the following question:\\
 \emph{What is the largest radius $\rmax$ for which
\begin{equation} \label{claim}
 \min_{|t|\leq 1} G_N(r,t) \geq 0 \quad \text{for}~0 \leq r \leq \rmax
\end{equation}
holds, and how does this radius depend on $N$?}\\
In the remainder of this section we show  that a unique $ \rmax$ exists and derive first (rather inaccurate) lower and upper bounds for $1- \rmax$. In Section~\ref{secbounds} we derive sharp lower and upper bounds for $1- \rmax$ that in particular reveal how $\rmax$ tends to 1 depending on $N$. \\
 We first derive a few basic propertes related to \eqref{claim}, where we use the following (cf. \cite[Sect. 2.7.5]{Atkinson}:
 \begin{equation} \label{basicprop} \begin{split}
 & L_{n,d}(1)=1,\quad L_{n,d}(-t)=(-1)^nL_{n,d}(t) \quad \text{for all}~t \in[-1,1],~ d\geq 2, ~n \geq 0,\\
 & \max_{t \in [-1,1]}|L_{n,d}(t)| = 1, \quad \text{for all}~d\geq 2,~n\geq 0.
 \end{split} \end{equation}

\begin{lemma} \label{lemClaim}
For $r_0 \in (0,1)$  we have:
\begin{equation} \label{ResClaim} \min_{|t|\leq 1} G_N(r_0,t) \geq 0 \quad \Rightarrow \quad \min_{|t|\leq 1} G_N(r,t) \geq 0 \quad \text{for all}~r \in [0,r_0].
\end{equation}
The following holds:
\begin{equation} \label{lower}
 \min_{|t|\leq 1} G_N(r,t) \geq 0 \quad \text{for all}~ r \in [0,\tfrac{1}{2d-1}].
 \end{equation}
 For any odd $N \geq 1$ and with $r_N^\ast:=1-\frac {1}{2N+\frac12 d}$ we have 
\begin{equation} \label{Res5}
 \min_{|t|\leq 1} G_N(r,t) < 0 \quad \text{for all}~ r \in [r_N^\ast,1].
\end{equation}
\end{lemma}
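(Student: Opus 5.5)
The plan is to prove the three claims separately: \eqref{ResClaim} by a Poisson-integral argument, \eqref{lower} by a crude bound, and \eqref{Res5} by evaluating at $t=-1$.

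\emph{Monotonicity \eqref{ResClaim}.} For $r=0$ we have $G_N(0,t)=1$, so let $r\in(0,r_0]$ and put $s=r/r_0\in(0,1]$; the case $s=1$ is the hypothesis, so take $s<1$. Fix $\xi$. The function $\eta\mapsto G_N(r_0,\xi\cdot\eta)=\sum_{n\le N}\nu_{n,d}r_0^n\ell_{n,d,\xi}(\eta)$ is a finite sum of spherical harmonics with $\ell_{n,d,\xi}\in\bbY_n^d$. Its harmonic extension evaluated at $s\zeta$ is $\sum_n \nu_{n,d}r_0^ns^nL_{n,d}(\xi\cdot\zeta)=G_N(r,\xi\cdot\zeta)$. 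Combining this with the representation \eqref{solution2} gives
\[
G_N(r,\xi\cdot\zeta)=\frac{1}{\omega_{d-1}}\int_{S^{d-1}}G(s,\zeta\cdot\eta)\,G_N(r_0,\xi\cdot\eta)\,d\eta .
\]
Both factors in the integrand are nonnegative, by \eqref{max1} and by the hypothesis. Choosing $\zeta$ with $\xi\cdot\zeta=t$ for arbitrary $t\in[-1,1]$ yields $G_N(r,t)\ge0$.

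\emph{Lower bound \eqref{lower}.} Using $|L_{n,d}|\le1$ from \eqref{basicprop} and the generating function \eqref{genfunction},
\[
G_N(r,t)\ge 1-\sum_{n\ge1}\nu_{n,d}r^n=2-\frac{1+r}{(1-r)^{d-1}} .
\]
So it suffices to show $1+r\le 2(1-r)^{d-1}$ for $r\le r_1:=1/(2d-1)$, and since the right-hand side decreases and the left-hand side increases in $r$, it suffices to check $r=r_1$. Bernoulli's inequality gives $(1-r_1)^{d-1}\ge 1-\frac{d-1}{2d-1}=\frac{d}{2d-1}$. Hence $2(1-r_1)^{d-1}\ge \frac{2d}{2d-1}=1+r_1$.

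\emph{Negativity \eqref{Res5}.} I will show $S(r):=G_N(r,-1)=\sum_{n=0}^N\nu_{n,d}(-r)^n<0$ for $r\in[r_N^\ast,1]$.

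For $d=2$ there is a closed form: $S(r)=1-2r(1+r^N)/(1+r)$ for odd $N$. This is negative iff $1-r<2r^{N+1}$. At $r\ge r_N^\ast$, Bernoulli gives $2r^{N+1}\ge 2N/(2N+1)>1/(2N+1)\ge 1-r$.

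For $d\ge3$ I use summation by parts. Set $D_n=\nu_{n,d}-\nu_{n-1,d}$ with $\nu_{-1,d}=0$. Then, since $N$ is odd,
\[
(1+r)S(r)=\sum_{n=0}^N D_n(-r)^n-\nu_{N,d}r^{N+1}.
\]
By \eqref{genfunction}, the $D_n$ are the coefficients of $(1+z)/(1-z)^{d-2}$. Their successive differences are the coefficients of $(1+z)/(1-z)^{d-3}$, which are nonnegative for $d\ge3$. Hence $(D_n)$ is nonnegative and nondecreasing.

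Now group the alternating sum in pairs $(2k,2k+1)$, which is possible because $N$ is odd. Each pair satisfies
\[
D_{2k}r^{2k}-D_{2k+1}r^{2k+1}\le D_{2k+1}r^{2k}(1-r)\le(1-r)D_{2k+1}.
\]
Summing, and using $\sum_k D_{2k+1}\le\sum_{n\le N}D_n=\nu_{N,d}$, I get
\[
(1+r)S(r)\le \nu_{N,d}\big((1-r)-r^{N+1}\big).
\]
For $r\ge r_N^\ast$, Bernoulli's inequality gives
\[
r^{N+1}\ge 1-\frac{N+1}{2N+\frac d2}=\frac{N+\frac d2-1}{2N+\frac d2}>\frac{1}{2N+\frac d2}\ge 1-r .
\]
The strict inequality holds because $d\ge3$ makes $N+\frac d2-1>1$. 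This gives $S(r)<0$.

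The main obstacle is this last step. The bound on $r^{N+1}$ is not the delicate part; what carries the argument is the monotonicity of $D_n$ combined with the odd parity of $N$. Without these, the positive terms of the alternating sum could not be controlled by the single negative term $\nu_{N,d}r^{N+1}$. The case $d=2$ fails the monotonicity (there $D_0=D_1=1$ and $D_n=0$ for $n\ge2$), which is why it is treated separately through the closed form.
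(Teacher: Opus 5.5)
Your proof is correct. The bound \eqref{lower} is handled as in the paper, but \eqref{ResClaim} and the case $d\ge3$ of \eqref{Res5} go by different routes.

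For \eqref{ResClaim} the paper argues by contradiction. It takes a nonnegative continuous $f$ concentrated near $\{\eta:\xi\cdot\eta\approx\hat t\}$, applies the classical maximum principle to the harmonic $u_N$ on the ball of radius $r_0$, and obtains $u_N(\hat r\xi)<0$. You instead write $G_N(r,\cdot)$ directly as the Poisson integral of $G_N(r_0,\cdot)$ with the nonnegative kernel $G(r/r_0,\cdot)$. This is justified by uniqueness for the Dirichlet problem, or by \eqref{Poissonid} together with \eqref{simplePform}. Your version is shorter and avoids the bump-function construction. The paper's version keeps the link with the discrete maximum principle for $u_N$ in view.

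For $d\ge3$ in \eqref{Res5}, the paper uses the explicit formula \eqref{defnu} to show $\nu_{2n,d}/\nu_{2n+1,d}<\frac{4N+d-2}{4N+d}=r_N^\ast$. Hence every pair in $\sum_n\nu_{2n,d}r^{2n}\big(1-r\,\nu_{2n+1,d}/\nu_{2n,d}\big)$ is negative at $r=r_N^\ast$; this is where the specific value of $r_N^\ast$ comes from. The paper then transfers the result to $[r_N^\ast,1]$ via \eqref{ResClaim}. Your summation by parts needs only \eqref{genfunction} and treats every $r\in[r_N^\ast,1]$ directly. That includes $r=1$, which the transfer via \eqref{ResClaim} (stated for $r_0\in(0,1)$) does not literally reach. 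For $d=2$, your Bernoulli step replaces the paper's monotonicity argument for $\frac1x-2(1-\frac1x)^x$.

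Your inequality $(1+r)G_N(r,-1)\le\nu_{N,d}\big((1-r)-r^{N+1}\big)$ actually holds on all of $[0,1]$, so it gives $G_N(r,-1)<0$ as soon as $r^{N+1}>1-r$. Consequently, for every odd $N$ and every $d\ge3$ you get $1-\rmax\ge 1-\rho$, where $\rho\in(0,1)$ solves $\rho^{N+1}=1-\rho$. By Lemma~\ref{lemmaW1}, $1-\rho$ behaves like $\frac{\ln N}{N}$. This is a lower bound valid for all odd $N$ and all $d\ge3$, much stronger than the $\frac{1}{2N+\frac12 d}$ in \eqref{Res5}. For $d=3$ its asymptotic constant is $1$ rather than the $\beta<2$ of Lemma~\ref{lem3d}.
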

\begin{proof}  Note that $(r,t) \to G_N(r,t)$ is continuous on $[0,1]\times [-1,1]$. 
For given $N$ let   $r_0 \in (0,1)$ be such that $ \min_{|t|\leq 1} G_N(r_0,t) \geq 0 $ holds. Consider the discrete Laplace problem \eqref{eqLaplaceDiscrete} with data $f \in C(S^{d-1})$, $f \geq 0$. Its solution $u_N$, given by \eqref{repres1discr}, satisfies
\[  \Delta u_N=0 \quad \text{in}~B^d, \quad  u_N= \cP_Nf \quad \text{on}~S^{d-1}.
\]
From \eqref{repres1discr}, $f \geq 0$ and $G_N(r_0,t) \geq 0$ for all $t \in [-1,1]$ it follows that $0 \leq u_N(x) \leq \max_{\xi \in S^{d-1}}f(\xi)$ holds for all $x$ with $\|x\|=r_0$. Since $u_N$ is harmonic, the maximum principle yields
\begin{equation} \label{MaxP}
 0 \leq u_N(x) \leq \max_{\xi \in S^{d-1}}f(\xi) \quad \text{for all}~ \|x\|\leq r_0.
\end{equation} 
 Assume that  there exists $\hat r \in [0,r_0)$ with $\min_{|t|\leq 1} G_N(\hat r,t) < 0$. Hence $G_N(\hat r,\hat t) < 0$ for some $\hat t \in [-1,1]$. Choose $\xi \in S^{d-1}$ and define $x:= \hat r \xi$. 
 For $\delta >0$ sufficiently small, define  $S_\delta:= \{\, \eta \in S^{d-1}\,|\, 
\xi \cdot \eta \in [\hat t -\delta, \hat t+\delta]$\,\}. Choose $f  \in C(S^{d-1})$, $f \geq 0$, with $f > 0$ on $S_{\frac12\delta}$ and $f=0$ on $S^{d-1} \setminus S_\delta$. For $\delta >0$ sufficiently small we then have
 $u_N(x)= \frac{1}{\omega_{d-1}}\int_{S^{d-1}} G_N(\hat r,\xi \cdot \eta) f(\eta)\, d \eta <0$, which contradicts the left inequality in \eqref{MaxP}.  Hence,  \eqref{ResClaim} holds. \\
From $\|L_{n,d}\|_\infty=1$ and  \eqref{genfunction} it follows that, for arbitrary $t \in [-1,1]$ we have
\begin{align*}
  G_N(r,t) & = 1+ \sum_{n=1}^N \nu_{n,d} r^n L_{n,d}(t) \geq 1-  \sum_{n=1}^\infty \nu_{n,d} r^n =2-  \sum_{n=0}^\infty \nu_{n,d} r^n\\ &  = 2-\frac{1+r}{(1-r)^{d-1}}= \frac{2(1-r)^{d-1}-r-1}{(1-r)^{d-1}}.
\end{align*}
Now note
\[
  2(1-r)^{d-1}-r-1 \geq 2\big(1-(d-1)r\big) -r-1=1-(2d-1)r \geq 0 \quad \text{for}~ r \in [0,\tfrac{1}{2d-1}],
  \]
which proves the result \eqref{lower}.
For proving \eqref{Res5}  we will show that for any odd $N \geq 1$ and with $r_N^\ast=1-\frac{1}{2N+\frac12 d}$
\begin{equation} \label{hulpRes5}
 \min_{|t|\leq 1} G_N(r_N^\ast,t) < 0
\end{equation}
holds.  The result \eqref{Res5} then follows from \eqref{hulpRes5} and \eqref{ResClaim}.
For deriving \eqref{hulpRes5} we note 
\[
 \min_{|t|\leq 1} G_N(r,t) \leq G_N(r,-1)= \sum_{n=0}^{N} \nu_{n,d} r^n L_{n,d}(-1)= \sum_{n=0}^{N} \nu_{n,d} (-r)^n
\]
for all $r \in [0,1)$. 
We first treat the case $d=2$. For this we obtain, using $\nu_{0,2}=1$, $\nu_{n,2}=2$ for $n \geq 1$, and the assumption that $N$ is odd:
\[
  \sum_{n=0}^{N} \nu_{n,d} (-r)^n = -1 + 2 \sum_{n=0}^N (-r)^n = \frac{1-r - 2 r^{N+1}}{1+r}.  
\]
For the nominator with $r=r_N^\ast= 1- \frac{1}{2N+1}$ we obtain
\[
  \frac{1}{2N+1} - 2\big( 1 -\frac{1}{2N+1}\big)^{N+1} \leq  \frac{1}{2N+1} - 2\big( 1 -\frac{1}{2N+1}\big)^{2N+1}=:f(2N+1),
\]
with $f(x)=\frac{1}{x} - 2 \big(1 - \frac{1}{x}\big)^x$. From $f'(x) \leq 0$ for $x \geq 3$ it follows that $f(2N+1) \leq f(3) <0$, and thus \eqref{hulpRes5} holds for $d=2$. For $d \geq 3$ we note
\begin{equation} \label{hh} \begin{split}
  \sum_{n=0}^{N} \nu_{n,d} (-r)^n & = \sum_{n=0}^{\frac12 (N-1)}  \nu_{2n,d} r^{2n} - \sum_{n=0}^{\frac12 (N-1)}  \nu_{2n+1,d} r^{2n+1} \\ & = \sum_{n=0}^{\frac12 (N-1)}  \nu_{2n,d} r^{2n}\big( 1- r \frac{\nu_{2n+1,d}}{\nu_{2n,d}}\big).
\end{split} \end{equation}
For the term with $n=0$ in the sum we have $ 1- r_N^\ast \frac{\nu_{1,d}}{\nu_{0,d}}= 1- r_N^\ast d \leq 1-\tfrac12 d <0$, and for $ 1 \leq n \leq \tfrac12( N-1)$ (using $d \geq 3$):
\[
  \frac{\nu_{2n,d}}{\nu_{2n+1,d}}= \frac{(2n+1)(4n+d-2)}{(2n+d-2)(4n+d)} \leq \frac{4n+d-2}{4n+d} < \frac{4N+d-2}{4N+d} = r_N^\ast.
\]
Hence we obtain $ 1- r_N^\ast \frac{\nu_{2n+1,d}}{\nu_{2n,d}} <0$ for all $n=0,1, \ldots, \tfrac12( N-1)$. Using this in \eqref{hh} completes the proof of \eqref{hulpRes5}  for $d \geq 3$.
\end{proof}
\ \\[1ex]
The results of this theorem show that there is a unique \emph{maximal} radius $r$, denoted by $\rmax$, such that \eqref{claim} holds. For any $r<1,\, t \in [-1,1]$ we have $\lim_{N \to \infty} G_N(r,t)=G(r,t)$ and thus $\lim_{N\to \infty }\rmax=1$ holds.
From \eqref{lower} and \eqref{Res5} it follows that
\begin{equation} \label{apriori}
 1- \rmax  \leq 1- \frac{1}{2d-1} \quad \text{for all $N$}, \quad \frac{1}{2N +\frac12 d} \leq 1-\rmax \quad \text{for $N$ odd}. 
\end{equation}
In Section~\ref{secbounds} below  we derive (sharp) upper and lower bounds for $1-\rmax$, which  
in particular reveal how fast this difference tends to zero as a function of the discretization parameter $N$.

\section{Basics on Lambert W function} \label{SecLambert}
We recall a few basic properties of the Lambert W function that we will use in the analysis.
The function $h:\, [-1, \infty) \to [-e^{-1}, \infty)$, $h(z):=z e^z$, is a monotone bijection. Its inverse function
is the Lambert $W$ function, cf. \cite{Corless1996}, $W:\, [-e^{-1}, \infty) \to [-1, \infty)$, with $W(h(z))=z$ for all $z \geq [-1, \infty)$ and $ W(z)e^{W(z)}=z$ for all $z \geq [-e^{-1}, \infty)$. The following (sharp) estimates for the Lambert function are from \cite[Theorem 2.1]{Hoorfar2008}:
\begin{equation} \label{EstiW}
   \ln z - \ln \ln z \leq W(z) \leq \ln z -\tfrac12  \ln \ln z \quad \text{for}~~z \geq e.
\end{equation}
From this result it follows that $\lim_{z \to \infty}\frac{W(z)}{\ln z}=1$ holds. 

Below we are interested in determining a possibly large $r^\ast(N)$ such that  
\begin{equation} \label{basic3}
  r^{ N} \leq 1- r \quad \text{for all}~r \in [0,r^\ast(N)].
\end{equation}
The best possible $r^\ast(N)$ is $r_0=r_0(N)$ with $r_0$ such that $r_0^{N}= 1- r_0$. 
We study the asymptotic dependence of $r_0(N)$ on $N$.
\begin{lemma} \label{lemmaW1}
Let $r_0=r_0(N)$ be such that $r_0^N=1-r_0$, $N\in \Bbb{N}$. The following holds:
\begin{equation} \label{estW1}
 \lim_{N\to \infty} \frac{1-r_0(N)}{\frac{1}{N} \ln N} =1.
\end{equation}
Furthermore,   \eqref{basic3} is satisfied for
\begin{equation} \label{estW2}
  r^\ast(N):= 1- \frac{W(N)}{N}.
\end{equation}
\end{lemma}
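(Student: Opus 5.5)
Both claims reduce to comparing $r^N$ with $1-r$ near $r=1$, and the natural variable is $s:=N(1-r)$, so that $r=1-s/N$ and $r^N=(1-s/N)^N$. For $r\in[0,1]$ the function $\phi(r):=r^N-(1-r)$ is strictly increasing ($\phi'(r)=Nr^{N-1}+1>0$), with $\phi(0)=-1<0$ and $\phi(1)=1>0$. Hence $r_0(N)$ is the unique zero of $\phi$ in $(0,1)$, and \eqref{basic3} holds for a given $r^\ast$ if and only if $\phi(r^\ast)\le 0$, i.e. $r^\ast\le r_0(N)$. So both statements come down to locating $r_0(N)$.

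For \eqref{estW2}, set $w:=W(N)$, so that $we^{w}=N$. For $N\ge 1$ we have $0<w<N$, since $W$ is increasing, $W(0)=0$, and $W(N)<N$. Thus $r^\ast=1-w/N\in(0,1)$ and
\[
 (r^\ast)^N=\Big(1-\frac{w}{N}\Big)^N\le e^{-w}=\frac{w}{N}=1-r^\ast,
\]
using $1-x\le e^{-x}$ for $x\in[0,1]$ and $e^{-w}=w/N$ from the defining relation of $W$. Therefore $\phi(r^\ast)\le 0$, which gives \eqref{basic3} on $[0,r^\ast(N)]$, by monotonicity of $\phi$ or directly, since $r^N$ increases and $1-r$ decreases. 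This also yields the lower bound $1-r_0(N)\le W(N)/N$.

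For \eqref{estW1} we still need a matching lower bound for $1-r_0(N)$. By \eqref{EstiW}, $W(N)/\ln N\to 1$, so the upper bound already gives $\limsup_{N\to\infty}\frac{1-r_0}{N^{-1}\ln N}\le 1$. For the liminf, fix $\varepsilon\in(0,1)$ and put $s:=(1-\varepsilon)\ln N$, $r:=1-s/N$. It suffices to show $\phi(r)>0$ for $N$ large, because then $r_0<r$, i.e. $1-r_0>(1-\varepsilon)\ln N/N$. Using $\ln(1-x)\ge -x-x^2$ for $0\le x\le\frac12$, we get
\[
 r^N\ge \exp\big(-s-s^2/N\big)=N^{-(1-\varepsilon)}e^{-s^2/N}.
\]
Since $s^2/N\to0$, this is at least $\tfrac12 N^{-1+\varepsilon}$ for large $N$. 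This exceeds $1-r=(1-\varepsilon)\ln N/N$ because $N^{\varepsilon}\gg\ln N$. Hence $\liminf\ge 1-\varepsilon$ for every $\varepsilon$, and \eqref{estW1} follows.

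The only delicate step is this lower bound for $1-r_0$. There the crude inequality $(1-x)^N\le e^{-xN}$ runs the wrong way, and one needs the quadratic correction $\ln(1-x)\ge -x-x^2$ together with the fact that $s^2/N\to 0$ on the scale $s\sim\ln N$. Everything else follows directly from $we^w=N$ and \eqref{EstiW}.
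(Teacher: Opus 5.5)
Your proof is correct. For \eqref{estW2} and for the $\limsup$ half of \eqref{estW1} it is essentially the paper's argument. The paper also bounds $r^N\le e^{-N(1-r)}$ (via $\ln r\le r-1$), identifies the root of $e^{-N(1-r)}=1-r$ as $1-W(N)/N$, and concludes $r^\ast\le r_0$, hence $1-r_0\le W(N)/N$. You just check $\phi(r^\ast)\le 0$ directly from $we^{w}=N$.

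The $\liminf$ half is where your route genuinely differs. The paper works at the unknown root itself:
\begin{itemize}
\item it writes $\ln r_0=r_0-1+R_N$ with $|R_N|\le 2(1-r_0)^2$;
\item it converts $r_0^N=1-r_0$ into the exact identity $N(1-r_0)=W(Ne^{NR_N})$;
\item it applies the lower estimate $W(z)\ge\ln z-\ln\ln z$ from \eqref{EstiW};
\item it needs the previously established boundedness of $q_N$ to show $NR_N/\ln N\to 0$.
\end{itemize}
You instead evaluate $\phi$ at the explicit trial point $r=1-(1-\varepsilon)\ln N/N$, using monotonicity of $\phi$ together with $\ln(1-x)\ge -x-x^2$. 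Because the quadratic correction $s^2/N$ is controlled at a known point, you need no bootstrapping and no Lambert function. Your argument is therefore shorter and entirely elementary.

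The paper's identity has one advantage: as a by-product it gives the explicit lower bound $q_N\ge\beta_N-\frac{\ln\ln N}{\ln N}-\frac{\ln\beta_N}{\ln N}$, i.e.\ a rate of convergence. Your method recovers the same rate if you let $\varepsilon$ depend on $N$. For example, with $s=\ln N-\ln\ln N$ you still get $\phi(1-s/N)>0$ for large $N$, since $e^{-s^2/N}>1-\frac{\ln\ln N}{\ln N}$ eventually.
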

\begin{proof} We first show that $r^\ast=r^\ast(N)$ as in \eqref{estW2} is a lower bound for $r_0$, where $r_0=r_0(N)$ is the unique solution in the interval $(0,1)$ of $r^N=1-r$.
 Hence, $r_0$ is the unique positive solution of $e^{N \ln r}= 1- r$. Using $\ln r \leq r-1$ (for $r \in (0,1]$) we obtain $e^{N \ln r} \leq e^{-N(1-r)}=:g_N(r)$ and due to monotonicity the unique solution $\hat r_0 \in (0,1]$ of $g_N(r)=1-r$ satisfies $\hat r_0 \leq r_0 \leq 1$. Now note
\[ \begin{split}
 g_N(\hat r_0) & =1-\hat r_0 ~\Leftrightarrow ~ e^{-N(1-\hat r_0)} = 1-\hat r_0  ~\Leftrightarrow ~N (1-\hat r_0)e^{N( 1-\hat r_0)}= N \\ & \Leftrightarrow ~h\big(N (1-\hat r_0)\big)=N\Leftrightarrow ~1-\hat r_0 = \frac{W(N)}{N}.
\end{split}\]
We conclude $\hat r_0=r^\ast$ and thus $r^\ast \leq r_0$ holds. Combining this with a monotonicity argument proves the result \eqref{estW2}. Since $\lim_{N \to \infty} \frac{W(N)}{N}=1$, cf. \eqref{EstiW}, this implies  $\lim_{N \to \infty} r_0(N)=1$.
We introduce the notation $q_N:=\frac{1-r_0(N)}{\frac{1}{N}\ln N}$. Using  $r^\ast \leq r_0$ and $\lim_{N \to \infty} \frac{W(N)}{N}=1$ we obtain
\begin{equation} \label{P1}
 q_N \leq \frac{1-r^\ast(N)}{\frac{1}{N}\ln N} \to 1 \quad \text{for}~N \to \infty.
\end{equation}
This in particular implies that the sequence $(q_N)_{N \geq 1}$ is bounded, i.e., $0\leq q_N \leq c$ for a suitable $c$ independent of $N$. 
From $r_0^N=1-r_0$ we obtain with Taylor expansion
\[
  \frac{1}{N} \ln (1-r_0)=\ln r_0 =\ln \big(1+(r_0-1)\big)=r_0-1 +R_N, \quad \text{with} ~|R_N|\leq 2(1-r_0)^2. 
\]
This can be rewritten as $(1-r_0)e^{N(1-r_0)}=e^{NR_N}$. Multiplying by $N$ and using the definition of the function $W$ we thus obtain $N\big( 1- r_0(N)\big) = W(N e^{N R_N})$. Hence
\[
   q_N=\frac{W(N e^{N R_N})}{\ln N}
\]
holds. Now note
\[
 \beta_N:=\frac{\ln (N e^{N R_N})}{\ln N} =1+\frac{N R_N}{\ln N}= 1+q_N \frac{R_N}{1-r_0(N)}.
\]
Using $|q_N| \leq c$, $|R_N|\leq 2(1-r_0(N))^2$ and $\lim_{N \to \infty}  r_0(N)=1$ it follows that $\lim_{N\to \infty} \beta_N =1$ holds. With the left inequality in \eqref{EstiW} we get  
\[ \begin{split}
 q_N &= \frac{W(N e^{N R_N})}{\ln N}\geq \frac{\ln (N e^{N R_N})}{\ln N} -\frac{\ln \ln (N e^{N R_N})}{\ln N}= \beta_N - \frac{ \ln (\beta_N \ln N)}{\ln N}  \\ & = \beta_N - \frac{\ln \ln N}{\ln N} - \frac{\ln \beta_N}{\ln N} \to 1 \quad \text{for}~ N \to \infty.
\end{split} \]
Combining this with the result in \eqref{P1} completes the proof of \eqref{estW1}. 
\end{proof}
\\[1ex]
For $N \geq 4$ it is not possible to derive an explicit expression in terms of elementary functions for $r_0(N)$. We do have a lower estimate $r^\ast(N)$ of $r_0(N)$ as in \eqref{estW2}.
For this estimate we  have, for $N \geq 3$,
\[
  r^\ast(N) \geq 1- \frac{1}{N} \big( \ln N -\tfrac12  \ln \ln N\big) \geq 1- \frac{\ln N}{N}
\]
and thus, it has  the correct asymptotic behavior: $ \lim_{N \to \infty} \frac{r^\ast(N)}{r_0(N)}=1$.  
\begin{corollary}\label{CorolA}
Let $c>0$ be given. For $r^\ast(N)=1-\frac{W(cN)}{N}$ the following holds for all $N \in \Bbb{N}$:
\begin{equation} \label{eqcorol}
  c r^N \leq 1-r  \quad \text{for all}~ r\in [0,r^\ast(N)].
\end{equation}
\end{corollary}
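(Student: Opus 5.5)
The plan is to repeat the first part of the proof of Lemma~\ref{lemmaW1}, now with the constant $c$ included. We replace $r^N$ by an exponential upper bound and then rewrite the resulting inequality in terms of $h(z)=ze^z$, whose inverse is $W$. Since $c>0$ and $N\geq 1$, we have $cN>0$. Hence $W(cN)$ is well defined and $W(cN)> 0$, because $h(0)=0$ and $h$ is increasing. If $W(cN)/N>1$, then $r^\ast(N)<0$, the interval $[0,r^\ast(N)]$ is empty and there is nothing to prove. So assume $r^\ast(N)\ge 0$ and take $r\in[0,r^\ast(N)]$.

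\emph{Step 1.} Put $s:=1-r\in(0,1]$. Then $s\geq W(cN)/N$, that is, $Ns\geq W(cN)$. For $r\in(0,1]$ we use $\ln r\leq r-1$, exactly as in the proof of Lemma~\ref{lemmaW1}. This gives
\[
c\,r^N=c\,e^{N\ln r}\leq c\,e^{-N(1-r)}=c\,e^{-Ns}.
\]
For $r=0$ the left-hand side is $0$ and the claim $0\le 1$ is trivial. So it suffices to show $c\,e^{-Ns}\leq s$.

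\emph{Step 2.} Multiplying by $Ne^{Ns}>0$ shows that $c\,e^{-Ns}\leq s$ is equivalent to $cN\leq Ns\,e^{Ns}=h(Ns)$. We have $Ns\geq W(cN)\geq 0>-1$, so both $Ns$ and $W(cN)$ lie in the domain $[-1,\infty)$, on which $h$ is monotonically increasing. Therefore
\[
h(Ns)\geq h\big(W(cN)\big)=W(cN)e^{W(cN)}=cN.
\]
This is the required inequality, and \eqref{eqcorol} follows.

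No step is a real obstacle here. The only points needing care are the case where $r^\ast(N)$ is negative (the statement is then vacuous) and the check that $Ns$ and $W(cN)$ lie in the region where $h$ is increasing, so that the monotonicity argument is valid.
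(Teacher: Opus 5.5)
Your proof is correct and follows essentially the paper's route. The paper's one-line proof reruns the first part of the proof of Lemma~\ref{lemmaW1} with $g_N(r)=c\,e^{-N(1-r)}$: bound $r^N\le e^{-N(1-r)}$ via $\ln r\le r-1$, then invert $h(z)=ze^z$ to obtain $W(cN)$. You check the inequality directly for each $r$ using monotonicity of $h$ instead of comparing the roots $\hat r_0\le r_0$, and you also treat explicitly the vacuous case $W(cN)>N$ (i.e.\ $c>e^N$), which the paper's argument passes over.
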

\begin{proof}
This result follows from an easy modification in the first part of the proof of Lemma~\ref{lemmaW1}. Replace $g_N(r)=e^{-N(1-r)}$ by $g_N(r)=ce^{-N(1-r)}$ and all arguments apply, yielding the result \eqref{eqcorol}.
\end{proof}

\section{Bounds for $1-\rmax$} \label{secbounds}
Our aim is  to derive  estimates for $1-\rmax$, cf. \eqref{claim}.  We are mainly interested in (sharp) \emph{upper} bounds for $1- \rmax$, since $1- \rmax \leq c(N)$ implies the positivity property 
\[ \min_{t \in [-1,1]} G_{N}(r,t)  \geq 0 \quad \text{for all}~ r \in [0,1-c(N)].
\]
Below we first derive such upper bounds and we then study the sharpness of these bounds by deriving lower bounds for $1-\rmax$ in Section~\ref{seclower}.
\subsection{Upper bound for $1-\rmax$} \label{secupper}
Using $\|L_{n,d}\|_{L^\infty[-1,1]}=1$ and  \eqref{solution2} we obtain the basic estimate
\begin{equation} \label{centralideaA} \begin{split}
\min_{t \in [-1,1]} G_{N}(r,t) & \geq G(r,-1) - \|G_{N}(r,\cdot)- G(r,\cdot)\|_{L^\infty[-1,1]}   \\ & \geq  G(r,-1)- \sum_{n=N+1}^\infty \nu_{n,d} r^n \\
   & = \frac{1-r_0}{(1+r_0)^{d-1}} - \sum_{n=N+1}^\infty \nu_{n,d} r^n\quad \text{for all}~r \in [0,1).
\end{split} \end{equation}
Using  \eqref{ResClaim} this yields the following criterion for deriving an upper bound for  $1-\rmax$:
\begin{equation} \label{suffcondA}
 (1+r_0)^{d-1}\sum_{n=N+1}^\infty \nu_{n,d} r_0^n \leq 1-r_0 \quad \Rightarrow ~1-\rmax \leq 1- r_0,
\end{equation}
which is used in the proof of the following theorem.  Note that \eqref{suffcondA} can only hold for $r_0 < 1$.
\begin{theorem} \label{mainThm1}
 Define 
 \begin{equation} \label{defBN} B_{N,d}:=\ln\left(\sqrt{C_d (d-2)!}\,\,2^{\frac12(d-3)} N^{\frac12 d} \right), \quad N \geq 1,~ d \geq 2,
 \end{equation}
 with $C_d$ as in \eqref{boundnu}, i.e., $C_2=C_3=2$, $C_d=d$ for $d \geq 4$.  Assume $N \geq 3$. The following holds:
 \begin{equation} \label{mainestimate}
  1- \rmax \leq \min \left \{\frac{2}{N}\left( B_{N,d} +\tfrac14(d-2) B_{N,d}^{-1}\right), \, 1 \right\}.
 \end{equation}
\end{theorem}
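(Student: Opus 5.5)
The plan is to verify the sufficient condition \eqref{suffcondA} at the radius $r_0 := 1-\delta$ with $\delta := \frac{2}{N}\bigl(B_{N,d}+\tfrac14(d-2)B_{N,d}^{-1}\bigr)$. The bound $1-\rmax\le 1$ is trivial, so we may assume $\delta<1$, i.e.\ $r_0\in(0,1)$.

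\textbf{Step 1: bound the tail.} Using \eqref{boundnu}, $\nu_{n,d}\le C_d(n+1)^{d-2}$, we estimate
\[
\sum_{n=N+1}^\infty \nu_{n,d} r^n \le C_d\sum_{n=N+1}^\infty (n+1)^{d-2}r^n .
\]
Comparing with an integral, or with the generating function $\sum_n \binom{n+d-2}{d-2}r^n=(1-r)^{-(d-1)}$ shifted by $N$, this should give a bound of the form
\[
C_d\,(d-2)!\,\frac{r^{N}\,N^{d-2}}{(1-r)^{d-1}}
\]
up to lower-order factors. Since $(1+r)^{d-1}\le 2^{d-1}$, the condition in \eqref{suffcondA} then reduces to
\[
C_d(d-2)!\,2^{d-1}\,N^{d-2}\,r^N \lesssim (1-r)^d .
\]
The factor $2^{\frac12(d-3)}$ inside $B_{N,d}$ suggests that the exact constant coming out of this estimate is $2^{d-3}$ (perhaps $2^{d-1}/4$) rather than $2^{d-1}$. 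I would tune the tail estimate so that the final condition reads
\[
A\,N^{d-2}r^N\le (1-r)^d,\qquad A:=C_d(d-2)!\,2^{d-3}.
\]

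\textbf{Step 2: reduce to a scalar inequality in $x=N\delta$.} Write $r=1-\delta$ and use $\ln r\le -\delta-\tfrac12\delta^2$, so that $r^N\le e^{-N\delta}$. Taking logarithms and square roots, the condition becomes
\[
\tfrac12 N\delta \ \ge\ \ln\bigl(\sqrt A\,N^{d/2}\bigr)-\tfrac d2\ln(N\delta) \;=\; B_{N,d}-\tfrac d2\ln x .
\]
This is again a Lambert-W type inequality, cf.\ Corollary~\ref{CorolA}. Two things remain to be checked:
\begin{itemize}
\item If $x=2B+\tfrac{d-2}{2}B^{-1}$ with $B=B_{N,d}$, then $\tfrac12 x+\tfrac d2\ln x\ge B$, where the correction term $\tfrac14(d-2)B^{-1}$ absorbs the discrepancy between the exponent $d$ of $(1-r)$ and the $N^{d-2}$ prefactor.
\item $\ln(2B)$ is bounded from below appropriately for $N\ge3$.
\end{itemize}
These conditions look slack, so perhaps a sharper combination is needed. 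In that case I would keep the exact factor $(1-r)^{d-1}$ from the tail sum and the exact factor $\frac{1-r}{(1+r)^{d-1}}$, and use $\ln(1-\delta)\le-\delta$ together with $(n+1)^{d-2}\le N^{d-2}(1+\tfrac{n+1-N}{N})^{d-2}$. After multiplying out, the inequality becomes $e^{-x/2}x^{(d-2)/2}\le e^{-B}\cdot(\ldots)$, and the term $\tfrac14(d-2)B^{-1}$ is exactly what is needed to dominate $\tfrac{d-2}{2}\ln(1+\tfrac{d-2}{4B^2})$ via $\ln(1+y)\le y$.

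\textbf{Main obstacle.} The hard part is Step 1: obtaining a tail bound with precisely the constant $\sqrt{C_d(d-2)!}\,2^{\frac12(d-3)}$ and the power $N^{d/2}$ after the square root, valid for all $N\ge3$ and all $d\ge2$ rather than only asymptotically. I would first try the identity
\[
\sum_{n>N}\binom{n+d-2}{d-2}r^n \le \binom{N+d-1}{d-2}\,\frac{r^{N+1}}{(1-r)^{d-1}},
\]
bounding $\binom{N+d-1}{d-2}$ by $N^{d-2}$ times constants. If the constants do not match, I would fall back to a looser estimate; a loss of this kind only affects the constant inside the logarithm in $B_{N,d}$.
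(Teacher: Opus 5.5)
There is a genuine gap in Step 1, and it is not a matter of constants. The tail bound you aim for, $C_d(d-2)!\,N^{d-2}r^N(1-r)^{-(d-1)}$, is too weak. The same holds for your fallback $\binom{N+d-1}{d-2}r^{N+1}(1-r)^{-(d-1)}$. For $N(1-r)\gg1$ the tail $\sum_{n>N}(n+1)^{d-2}r^n$ is only of size $N^{d-2}r^N/(1-r)$, because only $k=n-N\lesssim(1-r)^{-1}\ll N$ contribute. So you lose a factor $(1-r)^{-(d-2)}\approx\bigl(N/(d\ln N)\bigr)^{d-2}$, which is polynomial in $N$.

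An algebra slip in Step 2 hides this loss. With $x=N(1-r)$ one has $(1-r)^d=x^d/N^d$. Hence $A N^{d-2}r^N\le(1-r)^d$ together with $r^N\le e^{-x}$ becomes $\tfrac12x+\tfrac d2\ln x\ge\ln\bigl(\sqrt A\,N^{d-1}\bigr)$, not $\ln\bigl(\sqrt A\,N^{d/2}\bigr)$. The two agree only for $d=2$.

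For $d\ge3$ your route therefore needs $1-r\gtrsim2(d-1)\ln N/N$ rather than the claimed $\approx d\ln N/N$. Concretely, at $x\approx2B_{N,d}$ and with the honest constant, the requirement reads $4N^{d-2}\lesssim(2B_{N,d})^d$, which fails for all large $N$. Contrary to your last sentence, the loss changes the power of $N$ inside the logarithm, not just a constant. The ``slack'' you noticed is an artifact of the slip.

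You also cannot tune $2^{d-1}$ down to $2^{d-3}$. In $B_{N,d}$ one has $2^{\frac12(d-3)}=\tfrac12\sqrt{2^{d-1}}$. Here $2^{d-1}$ is the honest bound for $(1+r)^{d-1}$, and the $\tfrac12$ comes from the substitution $y=\tfrac12N(1-r)$.

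The missing idea is to keep the powers of $N$ and of $(1-r)^{-1}$ correctly paired. With $n=N-1+k$ you have $\sum_{n>N}(n+1)^{d-2}r^n=r^{N-1}\sum_{k\ge2}(N+k)^{d-2}r^k$. Expand $(N+k)^{d-2}$ binomially and use \eqref{esthulp}, i.e.\ $\sum_{k\ge1}k^\ell r^k\le\ell!\,r(1-r)^{-\ell-1}$. This gives
\[
(1+r)^{d-1}\sum_{n>N}\nu_{n,d}r^n\le\alpha_d\,\frac{r^N}{1-r}\Bigl(N+\frac{1}{1-r}\Bigr)^{d-2},\qquad\alpha_d:=2^{d-1}C_d(d-2)!,
\]
so the leading $N^{d-2}$ carries only one factor $(1-r)^{-1}$.

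Then \eqref{suffcondA} follows from $\alpha_dN^{d-2}r^N\bigl(1+\frac{1}{N(1-r)}\bigr)^{d-2}\le(1-r)^2$. Take square roots and use $r^{N/2}\le e^{-y}$ and $\ln(1+u)\le u$. This reduces the condition to $ye^y\ge\tfrac12\sqrt{\alpha_d}\,N^{d/2}e^{(d-2)/(4y)}$ with $y=\tfrac12N(1-r)$.

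Apply $W(z)\le\ln z$, which is valid because the argument is at least $e$ for $N\ge3$. The threshold $z_*$ then satisfies $z_*^2\le B_{N,d}z_*+\tfrac14(d-2)$, hence $z_*\le B_{N,d}+\tfrac14(d-2)B_{N,d}^{-1}$. So the correction term comes from the factor $\bigl(1+\frac{1}{N(1-r)}\bigr)^{(d-2)/2}$. It does not come from dominating $\frac{d-2}{2}\ln\bigl(1+\frac{d-2}{4B^2}\bigr)$.

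Your fallback expansion $(n+1)^{d-2}=N^{d-2}\bigl(1+\frac{n+1-N}{N}\bigr)^{d-2}$ points the right way. However, you combine it with keeping the factor $(1-r)^{-(d-1)}$, which reintroduces exactly the loss above.
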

\begin{proof}
First note that from \eqref{boundnu} it follows that
\begin{equation} \label{L1}
\sum_{n=N+1}^\infty \nu_{n,d} r^n \leq C_d \sum_{n=N+1}^\infty (n+1)^{d-2} r^n \leq C_d r^{N-1} \sum_{k=2}^\infty (N+k)^{d-2} r^k.
\end{equation}
 We use the binomium 
$
  (N+k)^{d-2}= \sum_{\ell=0}^{d-2} \begin{pmatrix} d-2 \\ \ell \end{pmatrix} N^{d-2 -\ell} k^\ell $
and  the estimate
\begin{equation} \label{esthulp}
  \sum_{k=1}^\infty k^\ell r^k \leq \ell ! \frac{r}{(1-r)^{\ell +1}}, \quad r\in [0,1), ~\ell \in  \Bbb{N}.
\end{equation}
For completeness we give a proof of this estimate in the Appendix. Using \eqref{L1}, \eqref{esthulp}, $\ell ! \leq (d-2)!$ for $\ell \leq d-2$, we  obtain  for arbitrary $r\in [0,1)$:
\begin{align*}
 (1+r)^{d-1}\sum_{n=N+1}^\infty \nu_{n,d} r^n & \leq 2^{d-1}C_d r^{N-1} \sum_{\ell=0}^{d-2} \begin{pmatrix} d-2 \\ \ell \end{pmatrix} N^{d-2 -\ell} \sum_{k=2}^\infty k^\ell r^k  \\ & \leq 2^{d-1}C_d (d-2)! \frac{r^N}{1-r}\sum_{\ell=0}^{d-2} \begin{pmatrix} d-2 \\ \ell \end{pmatrix} N^{d-2 -\ell} \left(\tfrac{1}{1-r}\right)^\ell \\
 & = 2^{d-1}C_d (d-2)! \frac{r^N}{1-r} \left(N+ \tfrac{1}{1-r}\right)^{d-2}.
\end{align*}
It follows that the left inequality in \eqref{suffcondA} is satisfied if $r_0 \in [0,1)$ satisfies  the following inequality, with  the notation $\alpha_d:=2^{d-1}C_d (d-2)!$:
\begin{equation} \label{esthulp1}  \begin{split}
   \alpha_d r_0^N\left(1+ \tfrac{1}{N(1-r_0)}\right)^{d-2}N^{d-2} & \leq (1-r_0)^2  \\
  \Leftrightarrow~~ \alpha_d^\frac12 r_0^{\frac12 N}\left(1+ \tfrac{1}{N(1-r_0)}\right)^{\frac12 d-1} N^{\frac12 d} & \leq N(1-r_0).
  \end{split}
\end{equation}
With $\ln(1+x) \leq x$ for $x >-1$ we obtain
\[ \begin{split}
 r_0^{\frac12 N}\left(1+ \tfrac{1}{N(1-r_0)}\right)^{\frac12 d-1} & = e^{\frac12 N\ln(1+(r_0-1))} e^{\frac12(d-2) \ln (1+\tfrac{1}{N(1-r_0)})} \\ & \leq e^{-\frac12 N(1-r_0)} e^{\frac{\frac12 (d-2)}{N(1-r_0)}}= e^{-y_{N,r_0}}e^{\frac{\frac14(d-2)}{y_{N,r_0}}},
\end{split} \]
 with $y_{N,r_0}:=\tfrac12 N(1-r_0) \in [0,\tfrac12 N)$. This results in the sufficient condition: $r_0=r_0(N) \in [0,1)$ such that
\begin{equation} \label{esthulp2}
  \tfrac12 \alpha_d^\frac12 N^{\frac12 d} e^{\frac{\frac14(d-2)}{y_{N,r_0}}}  \leq y_{N,r_0} e^{y_{N,r_0}}.
\end{equation}
 Due to monotonicity properties of the functions  $z \to e^{\frac{\frac14(d-2)}{z}}$ and $z \to z e^z$ for $  z \geq 0$, it follows that there exists a unique $z_\ast >0$ such that $\tfrac12 \alpha_d^\frac12 N^{\frac12 d} e^{\frac{\frac14(d-2)}{z_\ast}}  =z_\ast e^{z_\ast}$ holds. Furthermore, \eqref{esthulp2} is satisfied for all  $y_{N,r_0} \in [0,\tfrac12 N)$ that satisfy $y_{N,r_0} \geq z_\ast$. Using the Lambert W function it follows that $z_\ast$ is the unique positive solution of
\[
 W\left( \tfrac12 \alpha_d^\frac12 N^{\frac12 d}e^{\frac{\frac14 (d-2)}{z_\ast}} \right) = z_\ast.
\]
Using $\alpha_d \geq 4$ and $N\geq 3$ it follows that $\tfrac12 \alpha_d^\frac12  N^{\frac12 d} e^{\frac{\frac14(d-2)}{z_\ast}} \geq 3$.  We apply  \eqref{EstiW}, which results in
\begin{equation} \label{esthulp3} \begin{split}
 z_\ast & = W\left(\tfrac12 \alpha_d^\frac12 N^{\frac12 d}e^{\frac{\frac14(d-2)}{z_\ast}} \right) \leq \ln \left(\tfrac12 \alpha_d^\frac12 N^{\frac12 d} e^{\frac{\frac14(d-2)}{z_\ast}} \right) \\ &  = \ln\left(\tfrac12 \alpha_d^\frac12  N^{\frac12 d}\right) + \frac{\frac14(d-2)}{z_\ast}=  B_{N,d}  + \frac{\frac14(d-2)}{z_\ast},  
 \end{split}
\end{equation}
cf. \eqref{defBN}. Hence, $z_\ast$ satisfies $z_\ast^2 \leq B_{N,d} \, z_\ast + \tfrac14 (d-2) $.  A monotonicity argument shows $z_\ast \leq \hat z$ where $\hat z$ is the positive root of  the quadratic equation $\hat z ^2 = B_{N,d} \, \hat z + \tfrac14 (d-2)$. With $\sqrt{1+x} \leq 1 + \tfrac12 x$ for $x>0$, we obtain
\begin{equation} \label{P3} \begin{split}
  \hat z & = \tfrac12 \left( B_{N,d} +\sqrt{   B_{N,d}^2 + d-2}\right)   =
 \tfrac12   B_{N,d}\left( 1+  \sqrt{ 1+ (d-2) B_{N,d}^{-2} } \right)
   \\ & \leq   B_{N,d}  + \tfrac14 (d-2)   B_{N,d}^{-1}. 
\end{split} \end{equation}
It follows that  \eqref{esthulp2} is satisfied for $ y_{N,r_0}:=B_{N,d}  + \tfrac14 (d-2)   B_{N,d}^{-1} \geq \hat z \geq z_\ast$, provided $y_{N,r_0} \leq \tfrac12 N$.
Recall that  $r_0$ and $y_{N,r_0}$ are related through $y_{N,r_0}=\tfrac12 N(1-r_0)$.  We conclude that the left inequality in \eqref{suffcondA} is satisfied for $r_0=r_0(N) \in [0,1)$ defined by 
\begin{equation} \label{Est6} 1-r_0 = \min\left\{\frac{2}{N} \left( B_{N,d}  + \tfrac14 (d-2)   B_{N,d}^{-1}\right),\, 1\right\} ,\end{equation}   
which completes the proof.
\end{proof}

\begin{remark} \label{remspecialcase} \rm
We discuss the bound in \eqref{mainestimate}. First we consider the estimate for the important cases $d=2$, $d=3$: 
\begin{align} 
  1- \rmax & \leq  2 \frac{\ln N}{N} \quad \text{for $d=2$},~ N \geq 3, \label{spcase1}\\
  1- \rmax & \leq \frac{2}{N} \left( \ln\big( \sqrt{2}N^\frac32\big)  +\frac{1}{4\ln\big( \sqrt{2}N^\frac32\big)} \right)  \label{spcase2} \quad \text{for $d=3$},\, N \geq 8,  \\
  &  \approx  3 \frac{ \ln\big( 2^\frac13 N \big) }{N}.  \label{spcase3}
\end{align}
In \eqref{spcase3} we replaced $1+\frac14 \ln^{-2}\big( \sqrt{2}N^\frac32\big)$ by 1. Note $\frac14 \ln^{-2}\big( \sqrt{2}N^\frac32\big) \leq 0.021$ for $N\geq 8$.  In \eqref{spcase1} and \eqref{spcase2} we added $N\geq3$ and $N\geq 8$ to guarantee that the upper bounds are smaller than one. \\
We now consider $d \geq 4$, i.e., $C_d=d$ in \eqref{defBN}. As above we use the approximation $1+\tfrac14(d-2)B_{N,d}^{-2} \approx 1$. For the upper bound in \eqref{mainestimate} we then (approximately) have
\[
  \frac{2}{N}B_{N,d} = \frac{2}{N} \ln \left(\sqrt{d (d-2)! }\,2^{\frac12(d-3)} N^{\frac12 d}\right)= \frac{d}{N} \ln (\tilde c_d N), \quad \tilde c_d:=\big(d (d-2)!\big)^{\frac{1}{d}} 2^{1-\frac{3}{d}}.
\]
To see the growth of $\tilde c_d$ as a function of $d$ we use Stirling's approximation to conclude that for large enough $d$ we have $\tilde c_d \approx\big(d!\big)^{\frac{1}{d}} 2 \approx \tfrac{2}{e} d$. Hence for large $d$ we have
\[
 1- \rmax \lesssim \frac{d}{N} \ln \big( \tfrac{2}{e} d N \big).
\]
\end{remark}
 \begin{remark} \label{rem2dspecial} \rm
 A more refined analysis for the case $d=2$, used in \cite{ReuskenDD},  results in a (slightly) better bound as in \eqref{spcase1}. In this analysis we do not use the triangle inequality in \eqref{centralideaA}, but  directly study positiveness of $G_{N}(r,t)$. We outline this analysis. The Legendre polynomials $L_{n,2}$ are the Chebychev polynomials, i.e. $L_{n,2}(t)=T_n(t)=\cos (n \arccos t)$, cf. \eqref{Legendredistwo}. With $z:=re^{i  \theta}$, $\theta = \arccos t$,  one obtains
\begin{equation} \label{sumexpression}
  G_{N}(r,t)=1+\sum_{n=1}^N r^n \cos (n \theta)= \frac{1-r^2 - \big(z^{N+1}  +\bar{z}^{N+1}\big)+ r^2\big(z^{N}  +\bar{z}^{N}\big)}{(1-z)(1-\bar{z})}.
\end{equation}
Since the denominator $(1-z)(1-\bar{z})=1-2 r \cos(\theta)+r^2 \geq (1-r)^2$ is strictly positive, the sign of $G_{N}(r,t)$ is determined by the nominator. Note
\begin{align*}
 & 1-r^2 - \big(z^{N+1}  +\bar{z}^{N+1}\big)+ r^2\big(z^{N}  +\bar{z}^{N}\big) \\
 & = 1-r^2 -2 r^{N+1} \cos \big((N+1) \theta \big) + 2 r^{N+2} \cos(N \theta) \geq 1-r^2 -4 r^{N}.
\end{align*}
Hence,  a sufficient condition for $ \min_{t \in [-1,1] } G_{N}(r,t)\geq 0$ to hold is $r^N \leq \tfrac14(1-r^2)$. With $\hat r:=r^2$ this condition is $4\hat r^{\frac12 N} \leq 1-\hat r$. Using Corollary~\ref{CorolA} it follows that the latter condition is satisfied for all $\hat r \in [0, r^\ast(\tfrac12 N)]$ with $r^\ast(\tfrac12 N)=1-\frac{W(4\cdot \frac12 N)}{\frac12 N}=1- 2 \frac{W(2N)}{N}$. Using this and \eqref{EstiW} implies, for $N \geq 2$:
\begin{equation} \label{spcase1improved}
 1- (\rmax)^2 \leq 2 \frac{\ln (2 N)}{N}.
\end{equation}
For comparing this with \eqref{spcase1} we introduce $b(N):=\frac{\ln N}{N}$. In \eqref{spcase1}  we have the bound $2 b(N)$. From \eqref{spcase1improved} we obtain 
\begin{equation} \label{spcase1improvedA}
 1- \rmax\leq  \frac{ 4 b(2N)}{1+ \sqrt{1-4 b(2N)}}= 2 b(2N)\big( 1+ \mathcal{O}(b(2N))\big)  \quad (N \to \infty ).
\end{equation}
Hence, for not too small $N$ values this  bound is essentially of the form $2 b(2N)$, i.e.,
\begin{equation} \label{boundbetter}
 1-\rmax \lesssim \frac{\ln (2N)}{N},
\end{equation}
a bound in \eqref{spcase1} but with the factor 2 shifted into the logarithmic term. For $d=2$ the Legendre ($=$ Chebychev) polynomial has the simple trigonometric polynomial representation $L_{2,d}(t)=\tfrac12 \big( (e^{i \theta})^n +(e^{-i \theta})^n\big)$ which leads to the simple sum relation in \eqref{sumexpression}. For $d \geq 3$ the Legendre polynomials do not have such a representation. Therefore,   a sign  analysis directly based on $G_{N}(r,t)$, as in \eqref{sumexpression}, has no straightforward generalization for $d \geq 3$.  
 \end{remark}

 \subsection{Lower bound for $1-\rmax$} \label{seclower}
 We study sharpness of the  bound derived in  \eqref{mainestimate}. First note that a lower bound for $1- \rmax$ is given in \eqref{apriori}. This result shows that (for odd $N$) the $\frac{1}{N}$ term in \eqref{mainestimate} can not be avoided. We claim that also the $\ln N$ term in \eqref{mainestimate} can not be avoided. To support this claim we restrict to the (from an application point of view) most important cases $d=2$ and $d=3$ and derive lower bounds for these two cases. In both cases we use $\min_{t \in [-1,1]}  G_N(r,t)\leq  G_{N}(r,t_0)$ for all $t_0 \in [-1,1]$ and thus we have the following criterion for deriving a lower bound for $1-\rmax$:
 \begin{equation} \label{criterion2}
   G_{N}(r_0,t_0) < 0 \quad \text{for $r_0 >0$, $t_0 \in [-1,1]$} ~\Rightarrow ~~1-r_0 < 1-\rmax.
\end{equation}
Below we refine the analysis used in the proof of \eqref{Res5} in Lemma~\ref{lemClaim}. 
We first consider $d=2$.
\begin{lemma} \label{lem2d}
Consider $d=2$. Take $\beta \in (0,1)$ and define $r_0=r_0(\beta,N):= 1- \beta \frac{\ln N}{N}$. There exists $N_0 \in \Bbb{N}$ such that 
\[ \min_{t \in [-1,1]}  G_N(r_0,t) < 0\quad \text{for all}~~ N > N_0.
\]
\end{lemma}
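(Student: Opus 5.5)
We exhibit, for every large $N$, one point $t_0$ with $G_N(r_0,t_0)<0$, and then invoke criterion \eqref{criterion2}. For $d=2$ we have $\nu_{0,2}=1$, $\nu_{n,2}=2$ for $n\ge1$ and $L_{n,2}=T_n$. With $t=\cos\theta$ and $z=re^{i\theta}$ this gives $G_N(r,t)=1+2\,{\rm Re}\sum_{n=1}^N z^n$.

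\textbf{Step 1: closed form.} Summing the geometric series and subtracting the tail from the full Poisson kernel \eqref{form1} gives, for $r<1$,
\[
G_N(r,\cos\theta)=\frac{1-r^2}{|1-z|^2}-2\,{\rm Re}\Big(\frac{z^{N+1}}{1-z}\Big).
\]
The first term is the continuous kernel $G(r,t)$. It is smallest near $\theta=\pi$, where it is only of size $\tfrac12(1-r)$. The second term has modulus of order $r^{N+1}$.

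\textbf{Step 2: choice of the point.} Near $\theta=0$ the positive term is of order $1/(1-r)$ and would dominate, so we do not work there. Instead we take $\theta_N\in[\pi-\tfrac{2\pi}{N+1},\pi]$ with $(N+1)\theta_N\in 2\pi\Bbb{Z}$; such a $\theta_N$ exists since the interval has length $2\pi/(N+1)$. Then $z^{N+1}=r^{N+1}$ is real and positive.

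\textbf{Step 3: the two terms at $\theta_N$.} For large $N$ we have $\cos\theta_N<0$.
\begin{itemize}
\item First term: $|1-z|^2=1+r^2-2r\cos\theta_N\ge 1$, hence $(1-r^2)/|1-z|^2\le 1-r^2\le 2(1-r)$.
\item Second term: ${\rm Re}\big(1/(1-z)\big)=(1-r\cos\theta_N)/|1-z|^2\ge 1/(1+r)^2\ge\tfrac14$, so $-2{\rm Re}\big(z^{N+1}/(1-z)\big)\le-\tfrac12 r^{N+1}$.
\end{itemize}
Combining,
\[
G_N(r_0,\cos\theta_N)\le 2(1-r_0)-\tfrac12 r_0^{N+1}.
\]

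\textbf{Step 4: asymptotics.} Insert $1-r_0=\beta\frac{\ln N}{N}$. Using $\ln(1-x)\ge -x-x^2$ for small $x$, we get $r_0^{N+1}=\exp\big((N+1)\ln(1-\beta\tfrac{\ln N}{N})\big)\ge c\,N^{-\beta}$ for large $N$, with $c>0$ independent of $N$. So
\[
G_N(r_0,\cos\theta_N)\le 2\beta\frac{\ln N}{N}-\tfrac c2 N^{-\beta}.
\]
Since $\beta<1$, the ratio $N^{1-\beta}/\ln N\to\infty$, so the right-hand side is negative for all $N>N_0$. Taking $t_0=\cos\theta_N$ proves the lemma.

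The main obstacle is Step 2. The point must lie near $t=-1$, where $G(r_0,\cdot)$ is only $O(1-r_0)$, and the tail must have the right sign there. Taking $t=-1$ directly only works for odd $N$. Shifting $\theta$ by at most $2\pi/(N+1)$ to align the phase of $z^{N+1}$ handles all $N$, and this shift keeps both crude bounds of Step 3 valid.
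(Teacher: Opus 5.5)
Your proof is correct and follows essentially the paper's route. You use the same geometric-sum closed form for $d=2$ and a test point $t_0=\cos\theta$ with $\theta$ within $O(1/N)$ of $\pi$, where the tail term has the right sign, and you compare $1-r_0=\beta\frac{\ln N}{N}$ against $r_0^{N+1}\approx N^{-\beta}$. The difference is cosmetic: the paper treats odd and even $N$ separately (with $\theta=\pi$ and $\theta=(1-\tfrac1N)\pi$) and argues via limits. Your phase condition $(N+1)\theta_N\in 2\pi\Bbb{Z}$ handles both parities at once, giving $\theta=\pi$ for odd $N$ and $\theta=\pi-\tfrac{\pi}{N+1}$ for even $N$, and yields the explicit bound $G_N(r_0,\cos\theta_N)\le 2(1-r_0)-\tfrac12 r_0^{N+1}$.
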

\begin{proof} 
We use $\theta= \arccos t \in [0,\pi]$ and with $z:=r e^{i \theta}$ one obtains the representation
\begin{equation} \label{repres} \begin{split} G_N(r,t) & = \sum_{n=0}^N \nu_{n,2} r^n (-1)^n = -1 +2 \sum_{n=0}^N \cos(n \theta) r^n  \\
& = \frac{1-r^2 - \big(z^{N+1} +\bar z^{N+1}\big)+=\frac{\ln (2N)}{N}r^2\big(z^N +\bar z^N\big)}{(1-z)(1-\bar z)} \\
& = \frac{1-r^2 - 2r^{N+1} \cos\big((N+1)\theta\big) +2 r^{N+2}\cos (N \theta)}{1- 2r \cos(\theta) + r^2}.
\end{split} \end{equation}
Since the denominator is always positive,  the sign of $G_N(r,t)$ is determined by the sign of the nomimator.
First consider odd $N$. We then take $\theta=\pi$, i.e., $t_0=-1$ and thus the nominator equals $(1+r)\big(1-r - 2r^{N+1}\big)$. 
Note that $\lim_{N \to \infty} r_0= 1$ and $\lim_{N \to \infty} N^\beta r_0^N= 1$. Thus we have, using $\beta <1$,
\[ \begin{split}
 & \lim_{N\to \infty} N^\beta\big(1+r_0)(1-r_0-2 r_0^{N+1}\big)= 2 \beta \lim_{N \to \infty} N^{\beta-1} \ln N -4 \lim_{N\to \infty} N^\beta r_0^N =-4.
\end{split} \]
Hence for  $N_0$ sufficienly large $G_N(r_0,-1) < 0$ holds for all odd $N > N_0$. We now consider even $N$ and take $\theta=(1-\tfrac{1}{N})\pi$, i.e., $t_0=\cos(\pi - \frac{\pi}{N})$. For this $\theta$ we have $\cos\big((N+1)\theta\big)= \cos(\tfrac{\pi}{N})$, $\cos (N \theta)=-1$. The nominator then equals $(1+r)\big(1-r - 2r^{N+1}\big) + 2 r^{N+1}\big(1-\cos(\tfrac{\pi}{N})\big)$. Taking $r=r_0$ and noting $\lim_{N\to \infty} \big(1-\cos(\tfrac{\pi}{N})\big)=0$ we obtain
\[
  \lim_{N\to \infty} N^\beta \left((1+r_0)\big(1-r_0-2 r_0^{N+1}\big) +2 r_0^{N+1}\big(1-\cos(\tfrac{\pi}{N})\right)=-4, 
\]
and thus for  $N_0$ sufficienly large $G_N(r_0,t_0) < 0$ holds for all even $N > N_0$.
\end{proof}
\ \\[1ex]
For $d=3$ we use a similar approach, where we now restrict to $N$ odd, cf. Remark~\ref{RemOdd}.
\begin{lemma} \label{lem3d}
Consider $d=3$. Take $\beta \in (0,2)$ and define $r_0=r_0(\beta,N):= 1- \beta \frac{\ln N}{N}$. There exists $N_0 \in \Bbb{N}$ such that 
\[ \min_{t \in [-1,1]}  G_N(r_0,t) < 0\quad \text{for all odd}~~ N > N_0.
\]
\end{lemma}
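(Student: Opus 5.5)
Following the proof of Lemma~\ref{lem2d} for odd $N$, I take $t_0=-1$ and use criterion \eqref{criterion2}. By \eqref{basicprop}, $L_{n,3}(-1)=(-1)^n$, and $\nu_{n,3}=2n+1$, so
\[
 G_N(r,-1)=\sum_{n=0}^N (2n+1)(-r)^n .
\]
The first step is to find a closed form for this sum. I differentiate the geometric sum: with $S(x)=\sum_{n=0}^N x^n=\frac{1-x^{N+1}}{1-x}$ one has $\sum_{n=0}^N (2n+1)x^n = 2xS'(x)+S(x)$. At $x=-r$ with $N$ odd, $(-r)^{N+1}=r^{N+1}$. A direct computation then gives
\[
 G_N(r,-1)=\frac{1-r}{(1+r)^2}-\frac{r^{N+1}\big((2N+3)+(2N+1)r\big)}{(1+r)^2}.
\]
The first term is exactly $G(r,-1)$ for $d=3$, and the second term is the tail correction.

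The sign of $G_N(r_0,-1)$ is therefore the sign of
\[
 1-r_0-r_0^{N+1}\big((2N+3)+(2N+1)r_0\big).
\]
At $r_0=1-\beta\frac{\ln N}{N}$, we have $N^\beta r_0^N\to1$. This follows from $N\ln r_0=-\beta\ln N+\mathcal{O}\big(\frac{\ln^2N}{N}\big)$, exactly as in Lemma~\ref{lem2d}. Multiplying the expression by $N^{\beta-1}$ gives
\[
 N^{\beta-1}(1-r_0)=\beta N^{\beta-2}\ln N\to0 \quad (\beta<2),
\]
while
\[
 N^{\beta-1}r_0^{N+1}\big((2N+3)+(2N+1)r_0\big)=N^\beta r_0^N\cdot r_0\cdot\frac{(2N+3)+(2N+1)r_0}{N}\to 4.
\]
Hence the limit is $-4<0$. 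So there is an $N_0$ such that $G_N(r_0,-1)<0$ for all odd $N>N_0$, and therefore $\min_t G_N(r_0,t)<0$.

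The main obstacle is only bookkeeping: getting the closed form right and justifying $N^\beta r_0^N\to1$ via a Taylor expansion of $\ln(1-x)$ with $x=\beta\ln N/N\to0$. Two sanity checks support the closed form: at $N=1$ it gives $1-3r$, and it agrees with the factorization \eqref{hh}. As an alternative that avoids the closed form, I would use \eqref{hh} directly, bounding the tail sum $\sum_{n\le N}(2n+1)(-r)^n$ by pairing terms. The closed form is cleaner, and it explains the threshold $\beta<2$: the factor $N$ from $\nu_{N,3}$ competes against $1-r_0\sim\ln N/N$.
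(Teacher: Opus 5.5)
Your proof is correct and follows essentially the same route as the paper. Both evaluate at $t_0=-1$ and use the closed form $G_N(r,-1)=\frac{1-r-(2N+3)r^{N+1}-(2N+1)r^{N+2}}{(1+r)^2}$ for odd $N$ (your expression is algebraically identical to the paper's). Both then show that the numerator, multiplied by $N^{\beta-1}$, tends to $-4$, using $N^\beta r_0^N\to 1$ and $\beta<2$. Your explicit justification of $N^\beta r_0^N\to1$ via $N\ln r_0=-\beta\ln N+\mathcal{O}(\ln^2 N/N)$ fills in a step the paper only asserts.
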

\begin{proof}
Note that $\sum_{n=1}^N n x^n= x \frac{d}{dx}\left(\frac{1- x^{N+1}}{1-x}\right)= \frac{x-(N+1)x^{N+1}+N x^{N+2}}{(1-x)^2}$ ($x \neq 1$). Using this we obtain for odd $N$:
\begin{align*} G_N(r,-1)& = \sum_{n=0}^N \nu_{n,3} r^n (-1)^n =  \sum_{n=0}^N (2n+1)(-r)^n = 2 \sum_{n=1}^N n (-r)^n + \sum_{n=0}^N (-r)^n \\
 & = \frac{1-r - 2(N+\tfrac32) r^{N+1} - 2 (N + \tfrac12) r^{N+2}}{(1+r)^2}.
\end{align*}
For $r=r_0$ the sign is determined by the sign of the nomimator. 
Note that $\lim_{N \to \infty} r_0= 1$ and $\lim_{N \to \infty} N^\beta r_0^N= 1$. Thus we have, using $\beta <2$,
\begin{align*} & \lim_{N\to \infty} N^{\beta-1}\big(1-r_0-2(N+\tfrac32) r_0^{N+1} - 2 (N + \tfrac12) r_0^{N+2}\big)  =  -4.
\end{align*}
Hence for  $N_0$ sufficienly large $G_N(r_0,-1) < 0$ holds for all odd $N > N_0$.
\end{proof}

\begin{remark} \label{RemOdd}
\rm In the two lemmas above the result for \emph{odd} $N$ is obtained by taking $t_0=-1$ in \eqref{criterion2} and using that $L_{n,d}(-1)=(-1)^n$, which holds for all dimensions $d \geq 2$. For even $N$, we use in the proof of Lemma~\ref{lem2d} the explicit sum formula \eqref{repres} which is based on the Chebychev polynomial property $L_{n,2}(\cos \theta)=\cos( n \theta)$. This explicit formula then allows the choice of a suitable $t_0$ ($\neq -1$) such that asymptotically $G_N(r_0,t_0) < 0$ holds. This approach can not be used for $d \geq 3$. Therefore, in Lemma~\ref{lem3d} we restrict to odd $N$. A related interesting observation is that in  numerical experiments, cf. Figure~\ref{FigRes3D} below, we observe  that  for $d=3$ there is a significant difference in behavior of $\rmax$ between even and odd $N$, whereas for $d=2$ this is not the case. 
\end{remark}

\begin{corollary} \label{corol1} \rm 
We compare upper and lower bounds. For $d=2$ we have $1- \rmax \leq 2 \frac{\ln N}{N}$, which holds
for \emph{all} $N \geq 3$. This bound can be improved to a bound that essentially behaves as $\frac{\ln (2N)}{N}$, cf. \eqref{spcase1improvedA}. We have an \emph{asymptotic} lower bound of the form $\beta\frac{\ln N}{N} \leq 1- \rmax$ for any $\beta \in (0,1)$. This shows that since $\lim_{N \to \infty} \frac{\ln (2 N)}{N}/\frac{\ln N}{ N} =1$ the improved upper bound has the correct asymptotic behavior. The first upper bound is asymptotically a factor 2 too pessimistic, which is related to the fact that we use a triangle inequality and the bound holds for all $N \geq 3$. For $d=3$ we have the upper bound $3 \frac{ \ln (2^\frac13 N)}{N}$, cf. \eqref{spcase3}, that  holds for all $N \geq 8$. We have an \emph{asymptotic} lower bound of the form $\beta\frac{\ln N}{N} \leq 1- \rmax$ for any $\beta \in (0,2)$. Hence, since $\lim_{N \to \infty} \frac{3\ln (2^\frac13 N)}{N}/\frac{2\ln N}{ N}=\tfrac32$, the asymptotic behavior of the upper bound is correct apart from a factor $\frac32$. 
\end{corollary}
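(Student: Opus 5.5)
The corollary is a comparison of bounds already established, so I would prove it by assembling earlier results and computing two limits of ratios.

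For $d=2$ the statement has three parts. The upper bound $1-\rmax\le 2\frac{\ln N}{N}$ for all $N\ge 3$ is \eqref{spcase1}, i.e.\ Theorem~\ref{mainThm1} with $d=2$. There $B_{N,2}=\ln(\sqrt{2}\,2^{-1/2}N)=\ln N$ and the correction term $\tfrac14(d-2)B_{N,d}^{-1}$ vanishes. The improved bound of the form $\frac{\ln(2N)}{N}$ is \eqref{spcase1improvedA}, derived in Remark~\ref{rem2dspecial} from Corollary~\ref{CorolA} and \eqref{EstiW}. For the lower bound, fix $\beta\in(0,1)$. Lemma~\ref{lem2d} gives $\min_t G_N(r_0,t)<0$ with $r_0=1-\beta\frac{\ln N}{N}$ for all $N>N_0$. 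By \eqref{ResClaim}, negativity at $r_0$ rules out $r_0\le\rmax$, so $\rmax<r_0$, i.e.\ $\beta\frac{\ln N}{N}<1-\rmax$; this is exactly criterion \eqref{criterion2}. Finally, since $\ln(2N)/\ln N\to1$ and $\beta$ can be taken arbitrarily close to $1$, the improved bound is asymptotically sharp. The first bound is off by the factor $\lim 2\ln N/\ln N = 2$.

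For $d=3$ the pieces are analogous. The upper bound comes from \eqref{spcase2}, which is Theorem~\ref{mainThm1} with $C_3=2$, giving $B_{N,3}=\ln(\sqrt2 N^{3/2})$. Its simplified form \eqref{spcase3} follows by dropping the factor $1+\tfrac14 B_{N,3}^{-2}\le 1.021$ for $N\ge 8$. The asymptotic lower bound $\beta\frac{\ln N}{N}<1-\rmax$ for any $\beta\in(0,2)$ follows from Lemma~\ref{lem3d} combined with \eqref{criterion2}, restricted to odd $N$. The ratio is
\[
\frac{3\ln(2^{1/3}N)}{2\ln N}=\frac32\Big(1+\frac{\ln 2}{3\ln N}\Big)\to\frac32 .
\]
This gives the factor $\tfrac32$ once $\beta\uparrow 2$.

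No step is a genuine obstacle, since everything is quoted. The only care needed is in the wording: the lower bounds are asymptotic, and for $d=3$ they hold only for odd $N$ (Remark~\ref{RemOdd}). The claim of correct asymptotics up to $\tfrac32$ should therefore be read along the odd subsequence, with the supremum over admissible $\beta$ taken after the limit in $N$.
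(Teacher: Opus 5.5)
Your assembly matches the paper: the corollary has no separate proof and is obtained by putting together \eqref{spcase1}, \eqref{spcase1improvedA}, Lemma~\ref{lem2d}, \eqref{spcase2}--\eqref{spcase3}, Lemma~\ref{lem3d} via \eqref{criterion2}, and the two ratio limits. Your treatment of the lower bounds via \eqref{ResClaim} is correct, as are the ratio computation and the odd-$N$ caveat.

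One step is not valid as written: \eqref{spcase3} does not ``follow by dropping the factor $1+\tfrac14 B_{N,3}^{-2}$''. Theorem~\ref{mainThm1} gives $1-\rmax\le \frac{3\ln(2^{1/3}N)}{N}\bigl(1+\tfrac14 B_{N,3}^{-2}\bigr)$. Deleting a factor $\ge 1$ from the right-hand side of an upper bound gives a smaller quantity, and that smaller bound is not implied. The paper itself only writes $\approx$ there. For the asymptotic factor $\tfrac32$ this does not matter, since the factor tends to $1$. But the non-asymptotic claim that $3\frac{\ln(2^{1/3}N)}{N}$ bounds $1-\rmax$ for every $N\ge 8$ needs one more argument.

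The fix is to use the full upper estimate $W(z)\le \ln z-\tfrac12\ln\ln z$ from \eqref{EstiW} in \eqref{esthulp3}. With $B=B_{N,3}$ this gives $z_\ast\le B+\frac{1}{4z_\ast}-\tfrac12\ln\bigl(B+\frac{1}{4z_\ast}\bigr)\le B+\frac{1}{4z_\ast}-\tfrac12\ln B$. Suppose $z_\ast>B$. Then $z_\ast< B+\frac{1}{4B}-\tfrac12\ln B\le B$, because $B\ln B\ge\tfrac12$ (for $N\ge 8$ one has $B\ge\ln 32$); this is a contradiction. Hence $z_\ast\le B$. The choice $y_{N,r_0}=B$ is admissible, since $B\le\tfrac12 N$ for $N\ge 8$, so it satisfies \eqref{esthulp2}. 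This yields $1-\rmax\le\frac{2}{N}B_{N,3}=\frac{3\ln(2^{1/3}N)}{N}$ for all $N\ge 8$.
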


\section{Numerical experiments} \label{SecExp}
We numerically study for $d=2$ and $d=3$ the bounds derived for $1- \rmax$. For this we numerically determine, for given $N$, $\min \{\,\delta\,|\, \min_{|t|\leq 1} G_N(r,t) \geq 0 \quad \text{for all}~r \in [0,1-\delta] \,\}$. This approximation is denoted by  $\delta_{\rm num}(N)$ and yields an accurate estimate for $1-\rmax$, cf. Remark~\ref{Explain}.
\begin{remark} \label{Explain} \rm 
We explain, how we determine $\delta_{\rm num}(N)$ for a given $N \in [5,200]$.   Based on \eqref{Res5}, \eqref{spcase1}, \eqref{spcase3}, we define an a-priori interval $I_N=[a_N,b_N]$, $a_N:=1-2 \frac{\ln N}{N}$ for $d=2$, $a_N=1-3 \frac{\ln (2^{\frac13} N)}{N}$ for $d=3$, $b_N:=1-\frac{1}{2N}$ for which $\rmax \in I_N$ holds. We discretize this $r$-interval with $r_i:=a_N+(i-1)\frac{b_N-a_N}{100}$, $i=1,\ldots 101$. The $t$-interval is discretized using $M=20N$ equidistant points $t_j:=-1+(j-1)\frac{2}{M-1}$, $1 \leq j \leq M$. For given $r_i$ we determine $m_i:=\min_{1 \leq j \leq M} G_N(r_i,t_j)$, where the discrete Poisson kernel $G_N$  is defined in \eqref{idGN}, with $L_{n,2}(t)=T_n(t)= \cos (n \arccos t)$ and $L_{n,3}(t)=L_n(t)$ the classical Legendre polynomials that are evaluated using their recursive definition.  Starting from $m_1 >0$ we take $i=2,3,\ldots, i_{\rm crit}$, with $i_{\rm crit}$ the smallest $i$ value for which $m_i <0$ holds.  Finally, we define $\delta_{\rm num}(N):= 1- r_{i_{\rm crit}-1}$. 
\end{remark}

 In the experiments we use values $N=N_k=5k,~k=1,\ldots,40$. We compare $\delta_{\rm num}$ with the upper bounds, in particular:
\begin{align}
d=2: & \quad 1-\rmax \leq 2\frac{\ln N}{N}=:\delta_{{\rm th},1}(N) \quad \text{(cf. \eqref{spcase1})},\\
d=2: & \quad 1-\rmax \leq \frac{\ln (2 N)}{N}=:\delta_{{\rm th},2}(N),\quad \text{( cf. \eqref{boundbetter})},\\
d=3: & \quad 1-\rmax \leq 3\frac{ \ln (2^\frac13 N)}{N}=:\delta_{{\rm th}}(N), ~~N \geq 8,\quad\text{(cf. \eqref{spcase3})}.
\end{align}
For a better comparison  it is convenient to consider the inverses, i.e., $1/\delta_{\rm th}$ and $1/\delta_{\rm num}$. We also use an ansatz $\delta(a,b,N):= b \frac{\ln (a N)}{N}$ and fit this, using a weighted least squares method, to the numerical estimate $\delta_{\rm num}(N)$ of $1- \rmax$ as follows. The data are denoted by ${\bf b}=\left( \delta_{\rm num}(N_1), \ldots, \delta_{\rm num}(N_{40})\right) \in \R^{40}$. We have
\[
  \delta(a,b,N)= b \frac{\ln (a)}{N}+ b \frac{\ln N}{N}= \tilde a \frac{1}{N} + b   \frac{\ln N}{N}, \quad \tilde a=b \ln a.
\]
 We consider larger $N$ values to be more relevant than the very small ones and want to avoid that an initialization effect pollutes the parameter choice. Therefore we use a weighted least squares with a weighting $N_i$ for the $i$th measurement. Hence, we solve the linear least squares problem
 \[
     \min_{\tilde a, b} \sum_{i=1}^{40} \left( \tilde a + b \ln (N_i) - N_i {\bf b}_i\right)^2.
 \] 
The resulting parameter values are denoted by $\tilde a_\ast$, $b_\ast$, from which we obtain $a_\ast:=e^{\tilde a_\ast/b_\ast}$. This finally yields the fitted curve $\delta(a_\ast,b_\ast, x)=b_\ast \frac{\ln (a_\ast x)}{x}$, $x >0$. The resulting fitted curves are:
\begin{equation} \label{fit} \begin{split}
  d=2: & \quad \delta(a_\ast,b_\ast,N)= 0.84 \frac{\ln (1.03 N)}{N} \approx 1-\rmax,\\
  d=3: & \quad \delta(a_\ast^{\rm o},b_\ast^{\rm o},N)= 1.99 \frac{\ln (0.59 N)}{N}\approx 1-\rmax, \quad N \quad \text{odd}, \\
  d=3: & \quad \delta(a_\ast^{\rm e},b_\ast^{\rm e},N)= 1.73 \frac{\ln (0.63 N)}{N}\approx 1-\rmax, \quad N \quad \text{even}.
\end{split}
\end{equation}
Results are presented in the Figures~\ref{FigRes2D},\ref{FigRes3D}.

\begin{figure}[ht!]
	\centering\includegraphics[width=0.9\textwidth]{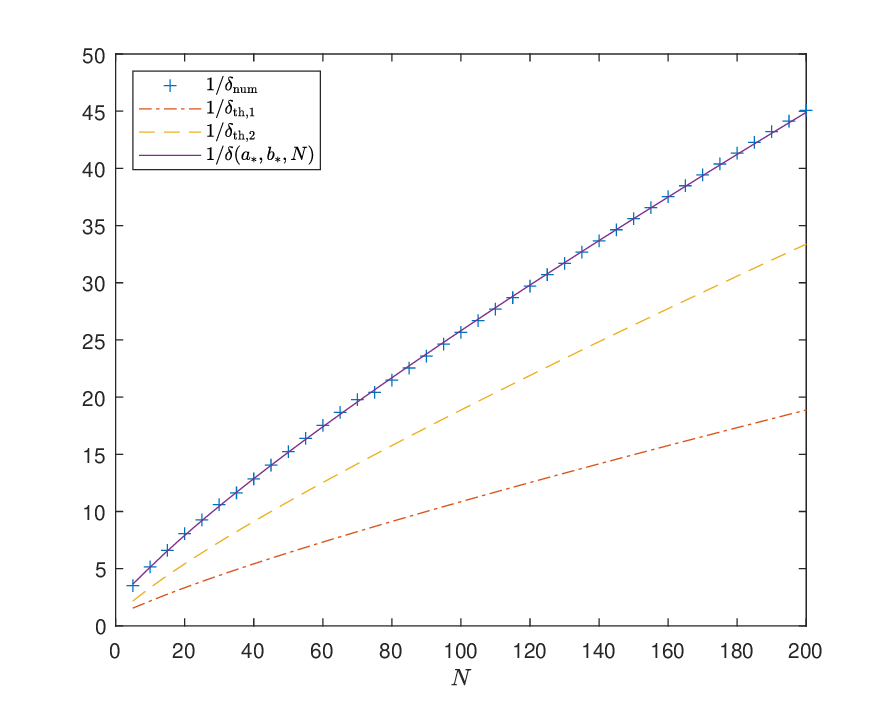} \hspace*{-0.8cm}
	\caption{Results for $d=2$: $1/\delta_{\rm num}$ and least squares fit $1/\delta(a_\ast,b_\ast, N)$, theoretical bounds $1/\delta_{{\rm th},1}$, $1/\delta_{{\rm th},2}$. \label{FigRes2D}}
\end{figure}

\begin{figure}[ht!]
	\centering\includegraphics[width=0.9\textwidth]{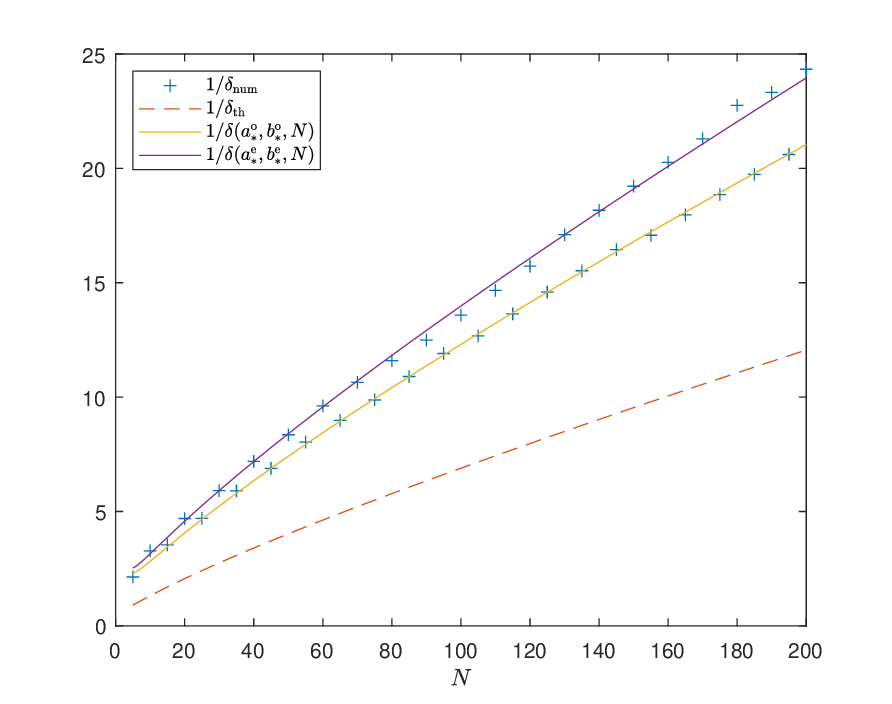} \hspace*{-0.8cm}
	\caption{Results for $d=3$: $1/\delta_{\rm num}$ and least squares fit $\delta(a_\ast^{\rm o},b_\ast^{\rm o}, N)$ for $N $ odd, $\delta(a_\ast^{\rm e},b_\ast^{\rm e}, N)$ for $N $ even, and theoretical bound $1/\delta_{{\rm th}}$. \label{FigRes3D}}
\end{figure}
These results confirm the theoretical analysis. Note that the curves defined in \eqref{fit}  yield a very good fit to (the approximation of) $1- \rmax$ and that for $d=3$ there is a difference between $N$ even and $N$ odd. 
\section{Appendix}
We give a proof of the estimate \eqref{esthulp}.  We use the binomial series
\[
  \frac{1}{(1-r)^{\ell +1}}= \sum_{k=0}^{\infty} \begin{pmatrix} \ell + k \\ k \end{pmatrix} r^k, \quad |r|<1,~\ell \in \Bbb{N}.
\]
With $\begin{pmatrix} \ell + k \\ k \end{pmatrix}=\begin{pmatrix} \ell + k \\ \ell \end{pmatrix} \geq \frac{(k+1)^\ell}{\ell !}$ we obtain
\[
 \frac{r}{(1-r)^{\ell +1}} \geq \frac{1}{\ell !} \sum_{k=0}^{\infty} (k+1)^\ell r^{k+1}= \frac{1}{\ell !} \sum_{k=1}^{\infty} k^\ell r^{k},
\]
which is the inequality \eqref{esthulp}.
\ \\[1ex]
{\bf Acknowledgements}\\
The author  acknowledges funding by the Deutsche
Forschungsgemeinschaft (DFG, German Research Foundation) - Project number 442047500 through
the Collaborative Research Center "Sparsity and Singular Structures" (SFB 1481). Furthermore he thanks Sven Gro{\ss} for helpful discussions.

\bibliographystyle{siam}
\bibliography{literatur}{}

\end{document}